\numberwithin{equation}{section}
\newtheorem{theorem}{Theorem}[section]
\newtheorem{Assumption}[theorem]{Assumption}
\newtheorem{lemma}[theorem]{Lemma}
\newtheorem{proposition}[theorem]{Proposition}
\newtheorem{definition}[theorem]{Definition}
\date{}
\begin{document}
\title{On the continuity of phase transition of three-dimensional square-lattice XY models}
\author{Zhou Gang}
\maketitle
\centerline{Department of Mathematics and Statistics, Binghamton University, Binghamton, NY, 13850}
\setlength{\leftmargin}{.1in}
\setlength{\rightmargin}{.1in}
\normalsize \vskip.1in
\setcounter{page}{1} \setlength{\leftmargin}{.1in}
\setlength{\rightmargin}{.1in}
\large

\date

\setlength{\leftmargin}{.1in}
\setlength{\rightmargin}{.1in}
\normalsize \vskip.1in
\setcounter{page}{1} \setlength{\leftmargin}{.1in}
\setlength{\rightmargin}{.1in}
\large

\section*{Abstract}
We study the continuity of magnetization at the phase transition of the ferromagnetic XY model in the three-dimensional square lattice with the nearest neighborhood interaction. We assume that, at the critical temperature, with probability 1, for every edge in the infinite directed graph generated by the random path representation, finitely many edges exist so that they form a finite loop.
Then, we prove that the phase transition is continuous at the critical temperature. The main technical contribution is to find a switching lemma to establish a bijection between equally weighted graphs.

\tableofcontents

\section{Introduction}
In the present paper, we are interested in the phase transition at the critical inverse temperature of the XY model on the three-dimensional square lattice.

To formulate the problem we start by considering a finite subset of a three-dimensional square lattice 
\begin{align}
\mathcal{L}=[-L,L]^3\bigcap \mathbb{Z}^3.
\end{align} On this lattice the Hamiltonian is defined as
\begin{align}\label{eq:Hamil}
H_{L,\nu}:=-\sum_{k,l\in \mathcal{L}} J_{k,l}S_k \cdot S_l
\end{align} and $\nu=+$ or $0.$
Here, depending on the location of the site $k$, $S_k\in \mathbb{S}^1$ satisfies different conditions: the lattice is decomposed into two parts,
\begin{align}
    \mathcal{L}=\mathcal{L}^{o}\cup \partial \mathcal{L},
\end{align} with $\mathcal{L}^{o}$ being the interior, and $\partial \mathcal{L}=\{(z_1,z_2,z_3)\ | \ \max\{|z_1|,\ |z_2|,\ |z_3|\}=L\}$ the boundary;
for $k\in \mathcal{L}^{o} $, the only requirement for $S_{k}$ is that $|S_k|=1;$ when $k\in \partial \mathcal{L}$ i.e. it is on the boundary, we consider two boundary conditions: when $\nu=0$, we use the free boundary condition, specifically
\begin{align}
S_k=0, \ \text{if}\ k \in \partial\mathcal{L};
\end{align}
when $\nu=+$, the plus boundary condition,
\begin{align}
S_{k}=(1,0)^{T}, \ \text{if}\ k \in \partial\mathcal{L}.
\end{align}
Here we only consider the nearest neighborhood interaction, specifically,
\begin{align}
J_{k,l}=\left\{
\begin{array}{cc}
\frac{1}{6}\ & \text{if}\ |k-l|=1;\\
0\ & \text{otherwise}.
\end{array}
\right.
\end{align} Thus for any fixed $k$,
$
\sum_{l}J_{k,l}=1.
$ Also, we remark that all the analyses in the present papers work for finite-ranged, symmetric ferromagnetic potentials.

Next, we define a probabilistic measure.

For any given local function $F$ of the spins, we define
\begin{align}\label{def:Finite}
\langle F\rangle_{\beta,\nu, L}:=\int \prod_{k\in \mathcal{L}^o} dA_{k} \ e^{- \beta H_{L,\nu}(S)}F(S)
\end{align} where $A_k$ is the area element for $S_k$; $\nu=+\ \text{or}\ 0;$ and $H_{L,\nu}$ denotes the Hamiltonian in \eqref{eq:Hamil}.
By choosing $F=1,$ we define
\begin{align}\label{def:zbkl}
Z_{\beta,\nu,L}:=\langle 1\rangle_{\beta,\nu,L}.
\end{align}

We are interested in the following quantities:
\begin{align}\label{def:expectation}
\big\langle F\big\rangle_{\beta,\nu}:=\lim_{L\rightarrow +\infty} \frac{\big\langle F\big\rangle_{\beta,\nu, L}}{Z_{\beta,\nu, L}}.
\end{align} And we are especially interested in the case $F=\cos(\theta_0)$. The so-called magnetization $m^*: [0,\infty)\rightarrow [0,\infty)$ is a function defined as
\begin{align}\label{def:mStBeta}
    m^*(\beta):=\Big\langle \cos(\theta_0)\Big\rangle_{\beta,+}.
\end{align}
It is well known that there exists a critical inverse temperature $\beta_c>0$ such that 
when $\beta<\beta_c$, $m^*(\beta)\equiv 0;$
and when $\beta>\beta_c$, $m^*(\beta)>0.$ 

A natural question is:  \begin{align}\label{eq:proIn}
\text{at the critical inverse temperature}\ \beta=\beta_c, \ \ m^*(\beta_c)=0?
\end{align} A positive answer implies the continuity of $m^{*}(\beta)$ at $\beta=\beta_c$ because (1) $\lim_{\beta \searrow \beta_c} m^{*}(\beta)=m^{*}(\beta_c)$; and (2) $m^{*}(\beta)=0$ if $\beta<\beta_c$.

We will consider the problem \eqref{eq:proIn} in the present paper.
Here we restrict our consideration to the 3-dimensional square lattice and nearest neighbor interaction. We need to assume that Assumption \ref{assu:uniqClus} holds, which states that, at the critical inverse temperature, in the directed graphs generated by the random path representation, with probability 1, for every fixed edge, one can find finitely many edges so that together they form a finite loop.
Under these conditions, we will prove that
\begin{align}
m^*(\beta_c)=0.
\end{align}

We briefly review some relevant results.
For the two-dimensional cases, there is the Mermin–Wagner theorem \cite{MerminWagner1966}.
For the long-ranged potential Ising model, it is known that magnetization at the critical inverse temperature might not be continuous; see, e.g. \cite{Aizenman1988DiscontinuityOT}.

Now we discuss some technicality. Perhaps the Ising model is the most well-studied classical spin model. A key tool is the so-called switching lemma, which Griffiths, Hurst, and Sherman invented \cite{GHS70} in 1970, and was used by Aizenman in \cite{AIZ82}, and then it was used extensively, see e.g. \cite{AizenmanFernandez1986, ADS2015, AizenDum2021,MR4072233}. For example, this technique has been successfully applied elsewhere e.g. \cite{SPST2021,werner2025switching}.

Given the significance of the switching lemma, it is important to find a similar technique for the other spin models, such as the XY model, which is to be considered in this paper. The present paper is centered around building a ``switching lemma" for directed graphs generated by the random path representation of the XY model.

Before discussing the difficulties, we briefly review some key steps in applying the switching lemma for undirected graphs. 

We consider two sets of pairs of (undirected) graphs on a lattice $\Big(\mathbb{Z}\cap [-N,N]\big)^{3}$ for some large integer $N$,  and a possible so-called ghost $\delta-$site, see e.g. the graphs generated by the random current representation of Ising model in \cite{ADS2015}. Consider the following two sets $\Lambda_{x,y}$ and $\Gamma_{x,y}$ defined as
\begin{align}\label{eq:undirectedDef}
\begin{split}
    \Lambda_{x,y}&:=\Big\{\big(A_{\delta},\ B\big)\ \Big|\ A_{\delta}\ \text{and}\ B \ \text{satisfy the conditions in (1), see below}\Big\};\\
    \Gamma_{x,y}&:=\Big\{\big(C_{\delta}, \ D\big)\ \Big|\ C_{\delta}\ \text{and}\ D \ \text{satisfy the conditions in (2), see below}\Big\},
\end{split}
\end{align} where $x$ and $y$ are two sites on the lattice, and the two conditions are
\begin{itemize}
    \item[(1)] $A_{\delta}$ and $B$ only have finitely many edges; $A_{\delta}=A_{\delta}(x,y)$ has sinks at $x$ and $y$, and $B$ has no sink, $A_{\delta}$ might depend on the $\delta$-site. And their union $A_{\delta}\cup B$ contains a $x\longleftrightarrow y$ path, and this path avoids the $\delta$-site. ----- In the rest of the paper we say this path is $\delta$-site avoiding.
    \item[(2)] $C_{\delta}$ and $D$ only have finitely many edges; $C_{\delta}$ has no sink; $D=D(x,y)$ has sinks at $x$ and $y$, $C_{\delta}$ might depend on the $\delta$-site. Their union $C_{\delta}\cup D$ contains a $\delta-$site-avoiding $x\longleftrightarrow y$ path.
\end{itemize}

The result is the following:
\begin{lemma}\label{LM:UndirSwitch}
Under the conditions above, there exists a bijection between $\Lambda_{x,y}$ and $\Gamma_{x,y}$.
\end{lemma}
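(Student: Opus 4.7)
The plan is to adapt the classical Griffiths--Hurst--Sherman switching argument: construct an explicit involution $\Phi$ on the disjoint union $\Lambda_{x,y} \sqcup \Gamma_{x,y}$ that preserves the multigraph sum $A_\delta + B$, swaps the source configurations, and hence restricts to a bijection between $\Lambda_{x,y}$ and $\Gamma_{x,y}$.

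The first step is to fix once and for all a total order on the (oriented) edges of the lattice, including the ghost edges at the $\delta$-site. This gives a canonical labeling of edge-copies in any finite multigraph $N$ on the lattice, which in turn allows me to specify a deterministic rule $\gamma(N)$ that selects a $\delta$-avoiding $x \leftrightarrow y$ path in $N$ as a sequence of labeled copies --- for instance, the lexicographically smallest among all shortest such paths. The crucial feature is that $\gamma(N)$ depends only on $N$ as a labeled multigraph, not on any decomposition of $N$ into two summands.

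Given a pair $(A_\delta, B) \in \Lambda_{x,y}$, I would set $N := A_\delta + B$, extract $\gamma := \gamma(N)$, and define $\Phi(A_\delta, B) := (C_\delta, D)$ by moving each edge-copy used by $\gamma$ to the opposite side of the partition: copies currently on the $A_\delta$ side move to the new $D$, and copies currently on the $B$ side move to the new $C_\delta$. Then $C_\delta + D = N$ by construction. A short parity computation --- using that each interior vertex of $\gamma$ is incident to an even number of copies of $\gamma$ while $x$ and $y$ are each incident to an odd number --- shows that toggling $\gamma$ flips the vertex-degree parity at exactly $x$ and $y$, so $\partial C_\delta = \emptyset$ and $\partial D = \{x,y\}$, matching the source constraints of $\Gamma_{x,y}$. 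Since $C_\delta + D = N$ still contains every edge-copy of $\gamma$, the $\delta$-avoiding path $\gamma$ survives in $C_\delta \cup D$, so $\Phi(A_\delta, B) \in \Gamma_{x,y}$. Bijectivity then follows immediately: applying $\Phi$ to $(C_\delta, D)$ yields the same $N$ and the same path $\gamma$, and flipping each copy of $\gamma$ a second time restores the original partition, so $\Phi \circ \Phi = \mathrm{id}$.

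The main delicate point, which I expect to be where the bookkeeping is most careful, is engineering the canonical path-selection at the level of labeled multigraph copies so that the forward and backward applications of $\Phi$ agree on precisely \emph{which} copy of a repeated edge is being toggled; the argument collapses unless both the choice of $\gamma$ and the identification of copies depend only on $N$. Once this canonical setup is in place --- via the fixed edge ordering together with a canonical labeling of multiplicities inside each $N$ --- the remaining verifications (parity of the swap, preservation of the multiset union, preservation of $\delta$-avoidance, and involutivity) are essentially routine.
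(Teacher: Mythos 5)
Your proposal is correct and is essentially the paper's own argument: your canonical path-selection rule $\gamma(N)$ depending only on $N=A_\delta+ B$ plays exactly the role of the paper's partition of $\Lambda_{x,y}$ and $\Gamma_{x,y}$ according to the first path $P_k$ in a fixed enumeration contained in $A_\delta\cup B$, and both arguments hinge on the same conservation law $A_\delta\cup B=C_\delta\cup D$ (your $C_\delta+D=N$) to guarantee that the same path is selected after switching, hence involutivity/disjointness of images.
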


The proof is based on the so-called switching lemma, and we will sketch it. 

Based on the information above, there exist countably many $x\longleftrightarrow y$-paths, denoted by $P_1$, $P_2$, $\cdots$, such that (1) they avoid the $\delta-$site, and (2) $\Lambda_{x,y}$ and $\Gamma_{x,y}$ can be partitioned into the following disjoint subsets:
\begin{align}
\begin{split}\label{eq:partition}
    \Lambda_{x,y}=&\cup_{k=1}^{\infty}\Lambda_{x,y;P_k},\\
    \Gamma_{x,y}=&\cup_{k=1}^{\infty}\Gamma_{x,y;P_k}
\end{split}
\end{align} where $\Lambda_{x,y;P_k}$ and $\Gamma_{x,y;P_k}$ are sets of graphs defined as
\begin{align}
\begin{split}\label{eq:twoSets}
    \Lambda_{x,y;P_k}:=&\Big\{\big(A_{\delta},\ B\big)\in \Lambda_{x,y} \ \Big|\ P_k\subset A_{\delta}\cup B;\ \text{and for any}\ l<k,\ P_l\not\subset A_{\delta} \cup B\Big\};\\
    \Gamma_{x,y;P_k}:=&\Big\{ \big(C_{\delta}, \ D\big) \in \Gamma_{x,y} \ \Big| \ P_k\subset C_{\delta} \cup D;\ \text{and for any}\ l<k,\ P_l\not\subset C_{\delta}\cup D\Big\}.
\end{split}
\end{align}
We can define a bijection $$F_{P_k}:\ \Lambda_{x,y;P_k}\rightarrow\Gamma_{x,y;P_k},$$ as follows: 
\begin{align}\label{eq:piecewise}
F_{P_{k}}(A_{\delta},B)=(C_{\delta},D)
\end{align} where $C_{\delta}$ and $D$ are defined as
\begin{align*}
\begin{split}
    C_{\delta}:=&(A_{\delta}\backslash P_{k})\cup (B\cap P_k),\\
    D:=&(B\backslash P_{k})\cup (A_{\delta} \cap P_k).
\end{split}
\end{align*}
It is easy to see that the map is a bijection. A very useful property is that there is a ``conservation law":
\begin{align}\label{eq:conserv}
    A_{\delta}\cup B=C_{\delta}\cup D, 
\end{align} which, together with \eqref{eq:twoSets}, guarantees that
\begin{align}\label{eq:interseEmp}
    F_{P_k} (\Lambda_{x,y;P_k})\cap F_{P_l} (\Lambda_{x,y;P_l})=\emptyset, \text{if}\ k\not=l.
\end{align}

\eqref{eq:partition}-\eqref{eq:interseEmp} establish a bijection between $\Lambda_{x,y}$ and $\Gamma_{x,y}$. Therefore, Lemma \ref{LM:UndirSwitch} holds.

Next, we consider directed graphs generated by the random path representation of the XY model.

In brief, for directed graphs, a simple generalization of the technique above does not work, especially because we do not have a ``conservation law" like \eqref{eq:conserv}. 

Specifically, if $(A_{\delta},B)$ contains $x\rightarrow y$ path $P$ and it avoids the $\delta$
site, then the switching  $F_P$ works as follows:
\begin{align}\label{eq:switchDir}
    F_p\Big((A_{\delta},B)\Big)=\Big(C_{\delta},D\Big)
\end{align}
where $C_{\delta}$ and $D$ are defined as
\begin{align*}
    C_{\delta}:=&(A_{\delta}\backslash P)\cup (\overline{B\cap P}),\\
    D:=&(B\backslash P)\cup (\overline{A_{\delta}\cap P}).
\end{align*}
Here $\overline{E}$ is the reverse of the directed graph $E.$ Consequently, instead of the conservation law \eqref{eq:conserv}, in general,
\begin{align}
    A_{\delta}\cup B \not= C_{\delta}\cup D.
\end{align} Consequently, a simple generalization of switching lemma will not work for the XY model. 
For the details, see \eqref{eq:adverse1} and \eqref{eq:adverse2} below. 

This new difficulty forces us to adopt the technique of edge-pairing used in \cite{chayes2000intersecting, van2021elementary,benassi2020loop}. The idea is the following: provided that there exists a paired $x\rightarrow y$ path $P$ and it avoids the $\delta-$site, we can switch this path to establish some bijection between certain sets, see \eqref{eq:pairedPSwi} below.

However, the edge-pairing has its limitations and needs to be refined to solve the present problem. Specifically, we can not guarantee that a useful paired path exists, for example, if all the $x\rightarrow y$ path passes through the $\delta$ site, then we have no path to switch to begin with. To overcome this difficulty, we need the following:
\begin{itemize}
    \item[(1)]
We need Assumption \ref{assu:uniqClus} to ensure that, as $L\rightarrow \infty,$ every graph has a finite $x\rightarrow y$ path, almost surely, so that we can at least have a path for switching, see Proposition \ref{prop:uniq} and Lemma \ref{LM:GraWithSink} below.
\item[(2)]
We pair the edges indiscriminately and then refine the pairing to ensure that, almost surely, every graph has a switchable paired $x\rightarrow y$ path, see \eqref{eq:finitelyLooped} below.
\item[(3)] After refining the pairing, we prove that the switching is between graphs of equal weights. For this we use the notion of ``surgical switching", for the detail, we refer to Proposition \ref{prop:keyWeights} below.
\end{itemize}

The paper is organized as follows. The main results will be stated in Section \ref{sec:MainRes}. The directed graphs generated by the random path representation will be derived in Section \ref{sec:directedGraph}. One of the main results, Theorem \ref{MainTHM:first}, will be reformulated and proved in Section \ref{sec:firstTHM} by assuming a technical result Proposition \ref{Prop:limitDxy}. Another main result, Theorem \ref{THM:second}, will be proved in Section \ref{sec:secondTHM}. In the rest of the paper, we prove Proposition \ref{Prop:limitDxy}.
\section*{Acknowledgement}
The author wishes to thank J\"urg Fr\"ohlich for suggesting the problem and for many stimulating discussions. The author is partly supported by Simons grant \#709542.

\section{Main Results}\label{sec:MainRes}
We start with defining a function
\begin{align}
\widetilde{M}_n(\beta):= \frac{1}{|B_n|^2} \sum_{x,y\in B_n}\langle S_x\cdot S_y\rangle_{0,\beta},
\end{align} where $B_n$ is a three-dimensional finite square lattice
\begin{align*}
    B_{n}:=\Big\{ (z_1,z_2,z_3)\in \mathbb{Z}^{3}\ \Big| \ |z_k|\leq n, k=1,2,3\Big\},
\end{align*} and recall that $\langle f\rangle_{0,\beta}$ is the expectation for free boundary condition at the inverse temperature $\beta.$

The first result is:
\begin{theorem}\label{THM:second}
At the critical inverse temperature $\beta=\beta_c$, 
\begin{align}
    \lim_{n\rightarrow\infty} \widetilde{M}_n(\beta_c)=0.
\end{align}
\end{theorem}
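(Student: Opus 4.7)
The strategy is to recognize $\widetilde{M}_n(\beta_c)$ as the variance of an empirical magnetization under the infinite-volume free Gibbs state, and then to bound this variance by the spontaneous magnetization $m^*(\beta_c)$, which vanishes by the companion main result (Theorem \ref{MainTHM:first}) of the paper.

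First, I would write $\bar{S}_n:=|B_n|^{-1}\sum_{x\in B_n}S_x$ so that
\begin{align*}
\widetilde{M}_n(\beta_c) \;=\; \langle \bar{S}_n\cdot\bar{S}_n\rangle_{0,\beta_c} \;=\; \bigl\langle |\bar{S}_n|^2\bigr\rangle_{0,\beta_c}.
\end{align*}
The finite-volume weights defining $\langle\cdot\rangle_{0,\beta,L}$ are invariant under the global $O(2)$-action on the interior spins (the free boundary spins are set to $0$ and $S_k\cdot S_l$ is $O(2)$-invariant), so the limiting measure $\mu_0:=\langle\cdot\rangle_{0,\beta_c}$ is $O(2)$-invariant; in particular $\langle \bar{S}_n\rangle_{\mu_0}=0$, and the quantity above is exactly the variance of $\bar{S}_n$ under $\mu_0$.

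Second, I pass to infinite volume and use ergodicity. The measure $\mu_0$ is translation-invariant, so I decompose it into its $\mathbb{Z}^3$-translation-ergodic components, $\mu_0=\int\mu_\omega\,d\nu(\omega)$. The multi-dimensional $L^2$ ergodic theorem gives $\bar{S}_n\to\langle S_0\rangle_{\mu_\omega}$ in $L^2(\mu_\omega)$ for $\nu$-a.e.\ $\omega$, and bounded convergence (using $|\bar{S}_n|\le 1$) yields
\begin{align*}
\lim_{n\to\infty}\widetilde{M}_n(\beta_c)\;=\;\int\bigl|\langle S_0\rangle_{\mu_\omega}\bigr|^2\,d\nu(\omega).
\end{align*}
To finish, one invokes the magnetization-domination bound $|\langle S_0\rangle_{\mu_\omega}|\le m^*(\beta_c)$ for every translation-ergodic Gibbs state of the XY model, which together with Theorem \ref{MainTHM:first} ($m^*(\beta_c)=0$) forces $\langle S_0\rangle_{\mu_\omega}=0$ for $\nu$-a.e.\ $\omega$, so the integral above vanishes.

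\textbf{Main obstacle.} The substantive technical step is the magnetization-domination inequality $|\langle S_0\rangle_{\mu_\omega}|\le m^*(\beta_c)$ for every ergodic Gibbs state. In the Ising setting this is classical via FKG and plus-state domination; for the XY model it can be derived either from Ginibre-type inequalities or, in the spirit of the present paper, from the random path representation of Section \ref{sec:directedGraph} together with the switching-lemma machinery to compare free- and plus-boundary two-point functions. A shorter alternative that circumvents Theorem \ref{MainTHM:first} entirely is the Fr\"ohlich--Simon--Spencer infrared bound applied in the reflection-positive state $\mu_0$: it gives $\langle S_x\cdot S_y\rangle_{0,\beta_c}\le C/(|x-y|+1)$ in three dimensions, after which Ces\`aro averaging over $B_n\times B_n$ already yields $\widetilde{M}_n(\beta_c)\lesssim 1/n$.
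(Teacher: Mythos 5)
Your primary route has a genuine logical problem: it is circular relative to the paper's architecture. You invoke Theorem \ref{MainTHM:first} ($m^*(\beta_c)=0$) to control the ergodic components, but in this paper Theorem \ref{MainTHM:first} is itself \emph{deduced from} Theorem \ref{THM:second}: the proof in Section \ref{sec:firstTHM} uses Theorem \ref{THM:second} to produce sequences $x_n,y_n$ with $\langle S_{x_n}\cdot S_{y_n}\rangle_{0,\beta_c}\to 0$, which is the step that kills $\langle\cos\theta_0\rangle_{+,\beta_c}^2$. So you cannot feed $m^*(\beta_c)=0$ back into a proof of Theorem \ref{THM:second}. A second, independent defect of the same route is that Theorem \ref{MainTHM:first} is conditional on Assumption \ref{assu:uniqClus}, while Theorem \ref{THM:second} is stated (and must be proved) unconditionally. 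Finally, the ``magnetization-domination'' inequality $|\langle S_0\rangle_{\mu_\omega}|\le m^*(\beta)$ for every ergodic Gibbs state is itself nontrivial for the XY model: the classical Ising argument uses FKG and plus-state domination, and the paper explicitly notes that FKG is not available in this setting, so this step would need its own proof.

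The ``shorter alternative'' you sketch at the end is essentially the paper's actual proof, and it is the route you should take as the main argument, not as a remark. The paper does not apply reflection positivity directly to the free state at $\beta_c$: it introduces the periodic Hamiltonian \eqref{def:Periodic}, compares free to periodic expectations via Ginibre's inequality \eqref{eq:compare0toPeriodic}, applies Gaussian domination to the periodic state at $\beta<\beta_c$ to get the bound \eqref{eq:ShortRan} in terms of the $\beta$-independent kernel $G(x,y)$ of \eqref{def:Gxy}, and then uses monotonicity in $L$ and $\beta$ (again Ginibre) to interchange limits and pass to $\beta_c$ via \eqref{eq:TwoLimit}. The Ces\`aro average of $G$ over $B_n\times B_n$ then tends to zero by \eqref{eq:LimGXY}. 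If you rewrite your argument with this as the backbone, it matches the paper; as written, the main body of your proposal does not stand.
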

The theorem will be proved in Section \ref{sec:secondTHM}, based on the results in \cite{FSS76,FILS78}, see also \cite{Biskup09, FV2018}.

The next result, Theorem \ref{MainTHM:first}, will be built on the following assumption.
\begin{Assumption}\label{assu:uniqClus}
At the critical inverse temperature $\beta=\beta_c$, for any directed graph $G$ generated by the random path representation, of free- and plus- boundary conditions on the infinite three-dimensional square lattice, every fixed edge $e_{a\rightarrow b}\in G$ is part of a finite loop, with probability 1.

\end{Assumption}

To illustrate the ideas we use the following two examples. The first graph is a good example because every edge belongs to a finite loop. The second one might not be good because, if the $a\rightarrow \infty$ and $\infty\rightarrow b$ paths don't cross, then the edge $b\rightarrow a$ does not belong to a finite loop.
\begin{figure}[ht]
  \subcaptionbox*{A good example}[.55\linewidth]{%
    \includegraphics[width=\linewidth]{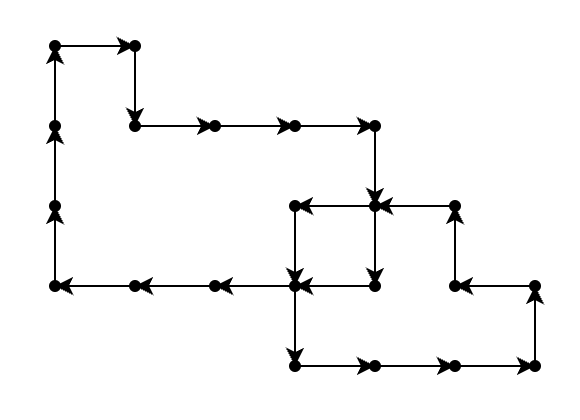}%
  }%
  \hfill
  \subcaptionbox*{A bad example}[.40\linewidth]{%
    \includegraphics[width=\linewidth]{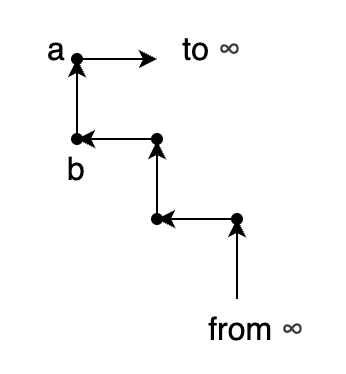}%
  }
  \caption{Two images}
\end{figure}

We continue to formulate our problem.

Assumption \ref{assu:uniqClus} is important because it implies Proposition \ref{prop:uniq} and Lemma \ref{LM:GraWithSink}, which in turn implies that there is at least a useful path for us to switch. The graphs will be derived in Section \ref{sec:directedGraph}.

It seems that Assumption \ref{assu:uniqClus} is reasonable. 
In studying the percolation problem for directed graphs, Grimmet and Hiemer proved, in \cite{Grimmett02}, that: for subcritical and critical cases, with probability zero, there exists an infinite open path.

Provided that we could prove the same results for our cases, specifically, with probability zero, a fixed $e_{a\rightarrow b}$ edge is part of the path reaching $\infty,$ then it implies that Assumption \ref{assu:uniqClus} holds at $\beta\leq \beta_c$: when there is no infinite path almost surely, the graphs are localized. Since our graphs are without sinks or sources, the assumption holds.

Unfortunately, for technical reasons, we can not prove the Assumption at the moment, for example, the FKG inequality used in \cite{Grimmett02} is not available here.

We are ready to state the main theorem of the present paper. Recall the definition of magnetization $m^*(\beta)$ from \eqref{def:mStBeta}.
\begin{theorem}\label{MainTHM:first}
Under Assumption \ref{assu:uniqClus}, at the inverse temperature $\beta=\beta_c$,
\begin{align}\label{eq:mbeta}
m^*(\beta_c)=0.
\end{align}
\end{theorem}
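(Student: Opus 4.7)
The plan is to reduce Theorem \ref{MainTHM:first} to the statement that an averaged two-point function under the free boundary condition vanishes at the critical point, which is exactly the conclusion of Theorem \ref{THM:second}.

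First, I would invoke the random path representation developed in Section \ref{sec:directedGraph} to express $m^*(\beta_c)=\langle\cos(\theta_0)\rangle_{\beta_c,+}$ as a ratio of weighted sums over directed graphs with a ghost $\delta$-site. Under the plus boundary condition this quantity becomes, up to normalization, the weighted count of directed graphs carrying a finite $0\to\delta$ path, where the $\delta$-site plays the role of the boundary at infinity. Under the free boundary condition the same representation expresses $\langle S_x\cdot S_y\rangle_{0,\beta_c}$ as a weighted count of graphs containing an $x\to y$ path.

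Next, I would square the magnetization and bound it in terms of the averaged two-point function appearing in $\widetilde{M}_n$. Specifically, I expect to establish an inequality of the form
\begin{align*}
\big(m^*(\beta_c)\big)^{2} \;\leq\; \frac{C}{|B_n|^{2}}\sum_{x,y\in B_n}\langle S_x\cdot S_y\rangle_{0,\beta_c} \;+\; o(1) \quad\text{as } n\to\infty,
\end{align*}
which is essentially the content of the promised technical result Proposition \ref{Prop:limitDxy}. Once this is in hand, Theorem \ref{THM:second} immediately forces $m^*(\beta_c)=0$. The inequality itself would be obtained by a switching-lemma argument adapted to directed graphs: one considers two ensembles of pairs of graphs, the first associated with $(m^*)^{2}$ and containing two graphs each carrying a $0\to\delta$ path (one from each factor of the magnetization), the second associated with the averaged two-point function and containing a graph pair with an $x\to y$ path. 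Using the edge-pairing technique together with the surgical switching formalism outlined in the introduction, culminating in the weight-preservation property of Proposition \ref{prop:keyWeights}, I would construct a weight-preserving bijection between the two ensembles, yielding the desired bound. Assumption \ref{assu:uniqClus}, via Proposition \ref{prop:uniq} and Lemma \ref{LM:GraWithSink}, ensures that for almost every configuration a finite paired path is available, so the bijection is well-defined on a set of full measure.

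The main obstacle is the construction of this weight-preserving bijection in the directed setting. In the undirected Ising case the naive conservation law \eqref{eq:conserv} makes the switching immediate, but in the directed XY case one must (i) pair edges indiscriminately and then refine the pairing so that a switchable path exists, (ii) verify that surgical switching preserves weights even though the straightforward set identity $A_\delta\cup B=C_\delta\cup D$ fails in the directed case described around \eqref{eq:switchDir}, and (iii) control the thermodynamic limit $L\to\infty$ so that the $o(1)$ remainder really does vanish. Step (iii) is where Assumption \ref{assu:uniqClus} is essential, since it rules out the possibility that a positive-measure set of graphs has its $0\to\delta$ path escaping to infinity and therefore being unavailable for switching; this is what ultimately allows the squared magnetization term to be absorbed into the averaged two-point function rather than into an uncontrolled boundary contribution.
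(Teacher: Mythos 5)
Your overall skeleton agrees with the paper's: bound $\big(m^*(\beta_c)\big)^2$ by the free-boundary two-point function and then invoke Theorem \ref{THM:second} to conclude. However, the mechanism you give for that bound is not the paper's, and as stated it has a gap. The paper splits the reduction into two steps of very different nature. First, the purely classical chain of correlation inequalities \eqref{eq:4IdIn} (translation invariance, the Griffiths inequality $\langle\cos\theta_x\rangle_{+}\langle\cos\theta_y\rangle_{+}\leq\langle\cos\theta_x\cos\theta_y\rangle_{+}$, and positivity of $\langle\sin\theta_x\sin\theta_y\rangle_{+}$) gives $0\leq\langle\cos\theta_0\rangle_{+,\beta_c}^2\leq\langle S_x\cdot S_y\rangle_{+,\beta_c}$ for every $x,y$; no switching is involved here. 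Second, the entire switching apparatus (Proposition \ref{Prop:limitDxy}, hence Assumption \ref{assu:uniqClus} and Proposition \ref{prop:keyWeights}) is used only to prove the boundary-condition identity \eqref{eq:sxyp0}, i.e.\ $\langle S_x\cdot S_y\rangle_{+,\beta_c}=\langle S_x\cdot S_y\rangle_{0,\beta_c}$, by cancelling the terms $D_{1,x,y}$ and $D_{2,x,y}$. Combining the two and picking $x_n,y_n$ from Theorem \ref{THM:second} finishes the proof.

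Your proposal instead asserts that the inequality $\big(m^*\big)^2\lesssim\frac{1}{|B_n|^2}\sum_{x,y}\langle S_x\cdot S_y\rangle_{0,\beta_c}+o(1)$ ``is essentially the content of'' Proposition \ref{Prop:limitDxy}, and that it follows from a single switching bijection between an ensemble of graph pairs each carrying a $0\to\delta$ path (representing the two factors of $m^*$) and an ensemble carrying an $x\to y$ path. That is a different switching identity from the one the paper constructs: Proposition \ref{Prop:limitDxy} compares $\tilde F_{\delta,L}(x\to y)Z_{0,L}$ with $F_{0,L}(y\to x)\tilde Z_{\delta,L}$ (sources/sinks at $x$ and $y$, with the $\delta$-site only as a spectator to be avoided), whereas your proposed bijection would need to switch configurations whose sources and sinks sit at the $\delta$-site itself. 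Nothing in the paper's edge-pairing, refined pairing, or surgical-switching machinery is set up for that, and you cannot simply cite Proposition \ref{prop:keyWeights} for it. Either supply and prove this additional $\delta$-anchored switching lemma, or replace that step by the paper's route: the standard correlation inequalities \eqref{eq:4IdIn} to pass from $\big(m^*\big)^2$ to $\langle S_x\cdot S_y\rangle_{+,\beta_c}$, and Proposition \ref{Prop:limitDxy} only to replace the plus boundary condition by the free one.
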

This theorem will be proved in Section \ref{sec:firstTHM}.

In the proof of Theorem \ref{MainTHM:first}, we will use some of the ideas in \cite{ADS2015}.

\section{Directed Graphs Generated By Random Path Representation}\label{sec:directedGraph}
Here we start with rewriting \eqref{def:Finite} by the so-called random path representation. Among the known representations, see \cite{FroSpe81, FroSpe83, benassi2017correlation}, we choose the random path representation because it is easier to establish a ``switching" lemma.

\subsection{Free boundary condition}
We rewrite Hamiltonian as
\begin{align}\label{eq:Hamil2}
H_{L,0}=-\sum_{k,l\in \mathcal{L}} J_{k,l}S_{k}\cdot S_{l}
=-\frac{1}{2}\sum_{k,l\in \mathcal{L}} J_{k,l}\Big( e^{i(\theta_k-\theta_l)}+e^{-i(\theta_k-\theta_l)}\Big),
\end{align}
where, in the second step we used $S_k=(\cos\theta_{k},\ \sin\theta_k)$ and
\begin{align}
\begin{split}
S_k\cdot S_l=&\cos\theta_k \cos\theta_l+\sin\theta_k\sin\theta_l\\
=&\cos(\theta_k-\theta_l)\\
=&\frac{1}{2}\big(e^{i(\theta_k-\theta_l)}+e^{-i(\theta_k-\theta_l)}\big).
\end{split}
\end{align}
Decompose the Hamiltonian $H$ into two parts
\begin{align}
H_{L,0}=H_{+}+H_{-},
\end{align} where $H_{+}$ and $H_{-}$ are defined as
\begin{align}
\begin{split}
H_{+}:=&-\sum_{\sigma(k)>\sigma(l) } J_{k,l} e^{i(\theta_k-\theta_l)},\\ H_{-}:=&-\sum_{\sigma(k)>\sigma(l) } J_{k,l} e^{-i(\theta_k-\theta_l)},
\end{split}
\end{align} here $\sigma: \mathcal{L}\rightarrow \mathbb{Z}$ is an (arbitrary) injective map.
Recall that $J_{k,k}=0$ and $J_{k,l}=J_{l,k}.$

Taylor-expand the exponentials $e^{-\beta H_{+}}$ and $e^{-\beta H_{-}}$ to obtain 
\begin{align}
\begin{split}
    e^{-\beta H_{+}}=&\sum_{n_{k\rightarrow l}}\prod_{\sigma(k)>\sigma(l) }(\beta J_{k,l})^{n_{k\rightarrow l}}\frac{1}{n_{k\rightarrow l}!}e^{i n_{k\rightarrow l}(\theta_k-\theta_l)};\\
    e^{-\beta H_{-}}=&\sum_{ n_{k\leftarrow l}}\prod_{\sigma(k)>\sigma(l) }(\beta J_{k,l})^{n_{k\leftarrow l}}\frac{1}{n_{k\leftarrow l}!}e^{-in_{k\leftarrow l}(\theta_k-\theta_l)}.
\end{split}
\end{align}
Consequently,
\begin{align}
\begin{split}
e^{-\beta H_{L,0}}=&e^{-\beta H_{+}} e^{-\beta H_{-}}\\
=&\sum_{n_{k\rightarrow l},\ n_{k\leftarrow l}}\prod_{\sigma(k)>\sigma(l) }(\beta J_{k,l})^{n_{k\rightarrow l}+n_{k\leftarrow l}}\frac{1}{n_{k\rightarrow l}!}\frac{1}{n_{k\leftarrow l}!}e^{i(n_{k\rightarrow l}-n_{k\leftarrow l})(\theta_k-\theta_l)}.
\end{split}
\end{align} We call the nonnegative integer $n_{k\rightarrow l}$ the number of edges going from $k$ to $l$; and $n_{k\leftarrow l}$ the number of edges going from $l$ to $k.$ 

Take the integral $d^3 \theta_k$ for any $k\in \mathbb{Z}^3$ and find that,
\begin{align}
 \int_{0}^{2\pi} e^{-\beta H_{L,0}} d^3 \theta_k\not=0
\end{align} only when the numbers of incoming edges and outgoing ones are equal, equivalently:
\begin{align}
 \sum_{j:\ \sigma(k)>\sigma(j) }n_{k\rightarrow j}+\sum_{l:\ \sigma(k)>\sigma(l)}n_{l\leftarrow k}=\sum_{l:\ \sigma(l)>\sigma(k)}n_{l\leftarrow k}+\sum_{j:\ \sigma(j)>\sigma(k) }n_{j\rightarrow k}.\label{eq:sumR}
\end{align}
Therefore, the quantity $Z_{\beta,0,L}$, defined in \eqref{def:zbkl}, takes a new form
\begin{align}
\begin{split}\label{eq:comp}
Z_{\beta,0,L}=&\frac{1}{(2\pi)^{\#\Lambda}} \int e^{-\beta H_{L,0}} \prod_{k\in L^0}d^3 \theta_{k}\\
=&\left\{
\begin{array}{ll}
\displaystyle\sum_{n_{k\rightarrow l},\ n_{k\leftarrow l}}\prod_{k,l}\Big[(\beta J_{k,l})^{n_{k\rightarrow l}+n_{k\leftarrow l}}\frac{1}{n_{k\rightarrow l}!}\frac{1}{n_{k\leftarrow l}!}\Big]\ \text{if}\ \eqref{eq:sumR}\ \text{holds for all}\ k,\\
\\
0\ \text{otherwise}.
\end{array}
\right.
\end{split}
\end{align}

To simplify the notations, we use the following expression: provided that \eqref{eq:sumR} holds,
\begin{align}\label{eq:fdeltal01}
\begin{split}
    Z_{\beta,0,L}=&\sum_{ \partial({\bf{n}_{0,\rightarrow}+\bf{n}_{0,\leftarrow}})=\varnothing}\big(\beta\bf{J}\big)^{{\bf{n}}_{0,\rightarrow}+{\bf{n}}_{0,\leftarrow}}
\frac{1}{{\bf{n}_{0,\rightarrow}}!} \frac{1}{{\bf{n}_{0,\leftarrow}}!}\\
&=\sum_{ \partial{\bf{n}_{0}}=\varnothing}\big(\beta\bf{J}\big)^{{\bf{n}}_{0}}
\frac{1}{{\bf{n}_{0}!}} 
\end{split}
\end{align}
where $\bf{J}$, $\bf{n}_{0,\rightarrow}$ and $\bf{n}_{0,\leftarrow}$ are vectors defined as
\begin{align}\label{def:vectors1}
    {\bf{J}}:=\Big(J_{k,l}\Big)_{(k,l):\ \sigma(k)>\sigma(l)},
\end{align} and
\begin{align}\label{def:vectors2}
\begin{split}
    {\bf{n}}_{0,\rightarrow}:=\Big(n_{k\rightarrow l}\Big)_{(k,l):\ \sigma(k)>\sigma(l)},\\
    {\bf{n}}_{0,\leftarrow}:=\Big(n_{k\leftarrow l}\Big)_{(k,l):\ \sigma(k)>\sigma(l)},
\end{split}
\end{align} and ${\bf{n}_{0}}$ is a (natural) concatenation of $\bf{n}_{0,\rightarrow}$ and $\bf{n}_{0,\leftarrow}$; and $\partial {\bf{n}}_{0}=\emptyset$ signifies that, for any site $z,$
\begin{align}
  \sum_{\sigma(k)>\sigma(z)}  \Big(n_{k\rightarrow z}+n_{z\leftarrow k}\Big)=\sum_{\sigma(z)>\sigma(k)}  \Big( n_{k\rightarrow z}+n_{z\leftarrow k}\Big).
\end{align}

Next, we compute the quantity $F_{\beta,0,L}$ defined in \eqref{def:Finite}. 

Compute directly to obtain
\begin{align}\label{eq:fdeltal02}
\begin{split}
F_{\beta,0,L}(x,y)=&\frac{1}{(2\pi)^{\#\Lambda}}\int e^{-\beta H_{L,0} }S_x \cdot S_y\ \prod_{k\in \mathcal{L}^0}\ d\theta_k\\
= &\frac{1}{2}\Big(\sum_{
\partial {\bf{n}}_{0}=\{x\rightarrow y\}
}+\sum_{
\partial {\bf{n}}_{0}=\{y\rightarrow  x\}
} \Big)(\beta\bf{J})^{\bf{n}_{0}}\frac{1}{{\bf{n}_{0}}!}\\
=& F_{0,L}(x\rightarrow y)+F_{0,L}(y\rightarrow x)
\end{split}
\end{align} where  the terms $F_{0,L}(x\rightarrow y)$ and $F_{0,L}(y\rightarrow x)$ are defined naturally; $\partial {\bf{n}}=\{x\rightarrow  y\}$ signifies that, as a directed graph, $x$ is the only source and $y$ is the only sink, mathematically,
\begin{align}\label{eq:inOut}
    \partial {\bf{n}}=\{x\rightarrow  y\}\Longleftrightarrow \left\{
    \begin{array}{ll}
    \displaystyle\sum_{j}n_{k\rightarrow j}=\sum_{l}n_{l\leftarrow k}, &\ \text{at the site}\ k\not\in \{x,y\},\\
    \displaystyle\sum_{j}n_{x\rightarrow j}=\sum_{l}n_{l\leftarrow x}+1, & \ \text{at the site} \ x,\\
    \displaystyle\sum_{j}n_{y\rightarrow j}+1=\sum_{l}n_{l\leftarrow y}, & \ \text{at the site}\ y.
    \end{array}
    \right.
\end{align}

\subsection{Plus boundary condition}
We start with rewriting the definition of $\langle S_{x}\cdot S_{y}\rangle_{\beta,+,L}$ in \eqref{def:Finite}. The reason is that, as in \cite{ADS2015}, treating all the boundary sites as a so-called ghost site $\delta$ will simplify our analysis.

To prepare for that we rewrite the Hamiltonian. From \eqref{eq:Hamil}, 
\begin{align}
H_{L,+}=&-\sum_{k,l\in \mathcal{L}^0} J_{k,l}S_{k}\cdot S_{l}-\sum_{k\in \mathcal{L}^0,\ l\in \partial\mathcal{L}} J_{k,l}S_{k}\cdot \left(
\begin{array}{c}
1\\
0
\end{array}
\right)\nonumber\\
=&-\sum_{k,l\in \mathcal{L}^0} J_{k,l}S_{k}\cdot S_{l}-\sum_{k\in \mathcal{L}^0} J_{k,\delta}S_{k}\cdot \left(
\begin{array}{c}
1\\
0
\end{array}
\right),
\end{align} where $\delta$ is called the ghost site; and $J_{k,\delta}\equiv J_{\delta,k}$ is defined 
for any $k\in \mathcal{L}^{0}$,
\begin{align}
J_{\delta,k}:=\sum_{m\in \partial L}J_{m,k}.
\end{align}

Define a new Hamiltonian $H_{L,\delta}$, 
\begin{align}
H_{L,\delta}:=-\sum_{k,l\in \mathcal{L}^0} J_{k,l}S_{k}\cdot S_{l}-\sum_{k\in \mathcal{L}^0} J_{k,\delta}S_{k}\cdot S_{\delta}.
\end{align} 

Based on this, we are ready to rewrite the partition function. Since $$S_{k}\cdot S_{l}=(U S_k)\cdot (U S_l)$$ for any unitary $2\times 2$ matrix $U$, we have that,
\begin{align}
\langle S_{x}\cdot S_{y}\rangle_{\beta,+,L}=\frac{F_{\beta,+,L}(x,y)}{Z_{\beta,+,L}}=\frac{\tilde{F}_{\beta,\delta,L}(x,y)}{\tilde{Z}_{\beta,\delta,L}},
\end{align}
where, the quantities $\tilde{F}_{\delta,L}$ and $\tilde{Z}_{\delta,L}$ are defined as,
\begin{align*}
\tilde{F}_{\delta,L}(x,y):=&\frac{1}{(2\pi)^{\#\Lambda+1}}\int e^{ -\beta H_{L,\delta} } \prod_{k\in L^{0}\cup \delta} d^3 \theta_k,\\
\tilde{Z}_{\delta,L}:=&\frac{1}{(2\pi)^{\#\Lambda+1}}\int e^{ -\beta H_{L,\delta} } S_{x}\cdot S_{y}\prod_{k\in L^{0}\cup \delta} d^3 \theta_k.
\end{align*} 

What is left is similar to deriving \eqref{eq:fdeltal01} and \eqref{eq:fdeltal02}. Compute directly to find 
\begin{align}\label{eq:fdeltal1}
\begin{split}
\tilde{Z}_{\delta,L}=&\sum_{\partial({\bf{n}_{\delta,\rightarrow}+\bf{n}_{\delta,\leftarrow}})=\varnothing }\big(\beta\bf{J}\big)^{{\bf{n}}_{\delta,\rightarrow}+{\bf{n}}_{\delta,\leftarrow}}
\frac{1}{{\bf{n}_{\delta,\rightarrow}}!} \frac{1}{{\bf{n}_{\delta,\leftarrow}}!}\\
=&\sum_{\partial{\bf{n}_{\delta}}=\varnothing }\big(\beta\bf{J}\big)^{{\bf{n}}_{\delta}}
\frac{1}{{\bf{n}_{\delta}}!} ,
\end{split}
\end{align} and 
\begin{align}\label{eq:fdeltal2}
\begin{split}
\tilde{F}_{\delta,L}(x,y)
= &\frac{1}{2}\Big(\sum_{
\partial( {\bf{n}_{\delta,\rightarrow}+\bf{n}_{\delta,\leftarrow}})=\{x\rightarrow y\}
}+\sum_{
\partial( {\bf{n}_{\delta,\rightarrow}+\bf{n}_{\delta,\leftarrow}})=\{y\rightarrow  x\}
} \Big)(\beta\bf{J})^{\bf{n}_{\delta,\rightarrow}+\bf{n}_{\delta,\leftarrow}}\frac{1}{{\bf{n}_{\delta,\rightarrow}}!} \frac{1}{{\bf{n}_{\delta,\leftarrow}}! }\\
=&\frac{1}{2}\Big(\sum_{
\partial {\bf{n}_{\delta}}=\{x\rightarrow y\}
}+\sum_{
\partial {\bf{n}_{\delta}}=\{y\rightarrow  x\}
} \Big)(\beta\bf{J})^{\bf{n}_{\delta}}\frac{1}{{\bf{n}_{\delta}}!} \\
=& \tilde{F}_{\delta,L}(x\rightarrow y)+\tilde{F}_{\delta,L}(y\rightarrow x),
\end{split}
\end{align} where the terms $\tilde{F}_{\delta,L}(x\rightarrow y)$ and $\tilde{F}_{\delta,L}(y\rightarrow x)$ are naturally defined.


\section{Reformulation of Main Theorem \ref{MainTHM:first}}\label{sec:firstTHM}
A crucial step in proving Theorem \ref{MainTHM:first} is to show that, at the critical inverse temperature $\beta=\beta_c$, for any sites $x$ and $y$,
\begin{align}\label{eq:desP0}
    \langle S_x\cdot S_{y}\rangle_{+,\beta_c}=\langle S_x\cdot S_{y}\rangle_{0,\beta_c}.
\end{align} Its necessity and importance will be discussed after Proposition \ref{Prop:limitDxy}.

For preparation, we consider the following quantity
\begin{align}
    \langle S_x\cdot S_{y}\rangle_{L,+,\beta}-\langle S_x\cdot S_{y}\rangle_{L,0,\beta}=\frac{\tilde{F}_{\delta,L}(x,y)}{\tilde{Z}_{\delta,L}}-\frac{F_{0,L}(x,y)}{Z_{0,L}}
=&D_{1,x,y}(L)+D_{2,x,y}(L), \label{eq:differe}
\end{align}
where the terms $D_{1,x,y}(L)$ and $D_{2,x,y}(L)$ are defined as
\begin{align*}
D_{1,x,y}(L):=&\frac{\tilde{F}_{\delta,L}(x\rightarrow y)Z_{0,L}-F_{0,L}(y\rightarrow x)\tilde{Z}_{\delta,L}}{\tilde{Z}_{\delta,L}Z_{0,L}},\\
D_{2,x,y}(L):=&\frac{\tilde{F}_{\delta,L}(y\rightarrow x)Z_{0,L}-F_{0,L}(x\rightarrow y)\tilde{Z}_{\delta,L}}{\tilde{Z}_{\delta,L}Z_{0,L}}.
\end{align*}

The present paper is centered around devising a switching lemma to cancel the terms in $D_{1,x,y}(L)$ and $D_{2,x,y}(L)$, and prove the following result.
\begin{proposition}\label{Prop:limitDxy}
Suppose that Assumption \ref{assu:uniqClus} holds. Then, at the critical inverse temperature $\beta=\beta_c,$ for any fixed sites $x$ and $y$,
\begin{align}\label{eq:limDxy}
    \lim_{L\rightarrow \infty} D_{1,x,y}(L)=\lim_{L\rightarrow \infty} D_{2,x,y}(L)=0.
\end{align}
\end{proposition}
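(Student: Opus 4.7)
The plan is to expand the numerators of $D_{1,x,y}(L)$ and $D_{2,x,y}(L)$ via \eqref{eq:fdeltal01}--\eqref{eq:fdeltal2} as double sums over pairs of directed graphs, and then construct weight-preserving bijections between the two pairs of sums so that the two terms in each numerator cancel, up to a small defect set whose contribution vanishes as $L\to\infty$. Concretely, $\tilde{F}_{\delta,L}(x\to y)\,Z_{0,L}$ is a sum (with positive weights of the form $w(A_\delta)\,w(B)$) over all pairs $(A_\delta,B)$ in which $A_\delta$ is a finite directed graph on $\mathcal{L}^0\cup\{\delta\}$ satisfying $\partial A_\delta=\{x\to y\}$ and $B$ is a sourceless finite directed graph on $\mathcal{L}^0$; similarly, $F_{0,L}(y\to x)\,\tilde Z_{\delta,L}$ is a sum over pairs $(C_\delta,D)$ in which $C_\delta$ is sourceless on $\mathcal{L}^0\cup\{\delta\}$ and $D$ has $\partial D=\{y\to x\}$ on $\mathcal{L}^0$. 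A bijection between these two families that preserves the product of weights would make the numerator of $D_{1,x,y}(L)$ vanish identically; the analogous matching handles $D_{2,x,y}(L)$.

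To construct such a bijection I would fix, once and for all, an enumeration $P_1,P_2,\ldots$ of directed $x\to y$ paths in $\mathcal{L}^0$ that avoid $\delta$, mirroring the setup in \eqref{eq:partition}--\eqref{eq:twoSets}. Given $(A_\delta,B)$ in the first family, the graph $A_\delta$ contains at least one directed $x\to y$ walk; conversely, for $(C_\delta,D)$ in the second family, the graph $D$ contains a directed $y\to x$ walk, which after reversal yields an $x\to y$ walk. Pairing the edges in the union $A_\delta\cup B$ (respectively $C_\delta\cup D$) indiscriminately as in \cite{chayes2000intersecting,van2021elementary,benassi2020loop}, and then partitioning by the first index $k$ for which the paired path $P_k$ is contained in the union, I would apply the directed switching of \eqref{eq:switchDir}: swap the edges of $P_k$ between the two graphs, reversing orientation on the swapped edges. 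Because pairing guarantees that reversed edges still sit in admissible configurations, and because the switching moves the source--sink pair from the $\delta$-graph to the $\delta$-free graph, the map lands in the targeted family. The critical refinement is to further adjust the pairing and use the ``surgical switching'' alluded to in Proposition \ref{prop:keyWeights}, so that the combinatorial factors $1/\mathbf{n}!$ and the monomials $(\beta\mathbf{J})^{\mathbf{n}}$ are matched term by term, yielding a \emph{weight-preserving} bijection despite the absence of the undirected conservation law \eqref{eq:conserv}.

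The hardest step is controlling the defect set of pairs $(A_\delta,B)$ for which no switchable paired $x\to y$ path avoiding $\delta$ exists in the union; for such pairs the bijection above does not apply, so the contribution to the numerator of $D_{1,x,y}(L)$ reduces to a sum over this defect set, and similarly for $D_{2,x,y}(L)$. This is where Assumption \ref{assu:uniqClus} enters decisively: through Proposition \ref{prop:uniq} and Lemma \ref{LM:GraWithSink}, it guarantees that in the infinite-volume limit every fixed edge almost surely belongs to a finite loop, so that in a typical configuration a finite $x\to y$ walk exists whose edges can each be completed to a finite loop and hence paired as required for switching. After normalization by $\tilde Z_{\delta,L} Z_{0,L}$, the relative weight of the defect set in each numerator is the probability of the exceptional event that no such switchable paired $x\to y$ path avoiding $\delta$ exists, and this probability tends to $0$ as $L\to\infty$ by the Assumption. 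Combining the weight-preserving bijection with the vanishing defect yields \eqref{eq:limDxy} at $\beta=\beta_c$, and the symmetric argument (interchanging the roles of $x$ and $y$) closes the proposition.
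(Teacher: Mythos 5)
Your overall architecture matches the paper's: expand the numerators as weighted sums over pairs of directed graphs, pair edges at every site except $x$ and $y$, switch a paired $x\rightarrow y$ object to map one family onto the other, and use Assumption \ref{assu:uniqClus} to guarantee that a usable path exists. However, there are two genuine gaps. First, your cancellation mechanism leans on the enumeration $P_1,P_2,\dots$ and the ``first contained path'' partition of \eqref{eq:partition}--\eqref{eq:twoSets}, and on switching a single paired path $P_k$. The paper is explicit that because edges at $x$ and $y$ are deliberately left unpaired, one must switch \emph{all} paired $x\rightarrow y$, $y\rightarrow x$ paths and $x\rightarrow x$, $y\rightarrow y$ loops simultaneously (the graph $P$ of Definition \ref{def:switchable}); switching only one paired path does not yield a well-defined bijection between the two families, and the ``first $P_k$'' bookkeeping is exactly what fails in the directed setting absent the conservation law \eqref{eq:conserv}.

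Second, and more seriously, your defect-set argument attributes the resolution of difficulty (B2) to Assumption \ref{assu:uniqClus}, but the Assumption only controls the \emph{unpaired} graphs: via Proposition \ref{prop:uniq} and Lemma \ref{LM:GraWithSink} it gives, almost surely, a decomposition into finite loops plus a finite $x\rightarrow y$ path. It does \emph{not} imply that an indiscriminately paired graph contains a switchable paired $x\rightarrow y$ path --- the pairing may route every paired path through the $\delta$-site or into an infinite strand. The paper resolves this not probabilistically but combinatorially: it \emph{redefines} the weights via the ``finitely-paired'' classes $\widehat{G_{\#,N}}$ subject to \eqref{eq:uniform} and \eqref{eq:count}, and then the real work is Proposition \ref{prop:keyWeights} / Lemma \ref{LM:ReSurg}, where the surgical switching is shown to be a bijection between the fibers $\{\Pi(V_{\delta,N})=\Pi(G_{\delta,N})\}$ and $\{\Pi(K_{0,N})=\Pi(F_{0,N})\}$, forcing $\Upsilon_{G_{\delta,N}}=\Upsilon_{F_{0,N}}$ and hence equality of the refined weights. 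You invoke ``surgical switching'' by name but supply neither the reweighting nor the counting argument; since this is precisely the step that replaces the missing conservation law, the proposal as written does not close. (The remaining defect the paper does handle --- paths and loops escaping the box $|z|\leq N$, controlled by the $\epsilon_N$ consistency argument \eqref{eq:consist} --- is also absent from your write-up, though the paper itself only sketches it.)
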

The proposition will be proved in Section \ref{sec:PropDxy}.

Proposition \ref{Prop:limitDxy} is important. Together with \eqref{eq:differe}, it implies that, for any fixed sites $x$ and $y$, at the critical inverse temperature $\beta_c,$
\begin{align}\label{eq:sxyp0}
    \Big\langle S_x\cdot S_y \Big\rangle_{+,\beta_c}=\Big\langle S_x\cdot S_y \Big\rangle_{0,\beta_c}.
\end{align} This identity is important for two reasons:
\begin{itemize}
    \item[(A)]
We can approximate $\Big\langle S_x\cdot S_y \Big\rangle_{0,\beta_c}$ by $\Big\langle S_x\cdot S_y \Big\rangle_{L, 0,\beta}$ with $L\gg 1$ and $\beta<\beta_c$ by using the following identities
\begin{align}\label{eq:ChFocus}
    \lim_{L\rightarrow \infty}\lim_{\beta \nearrow \beta_c}\Big\langle S_x\cdot S_y \Big\rangle_{L,0,\beta}=\lim_{\beta \nearrow \beta_c}\lim_{L\rightarrow \infty}\Big\langle S_x\cdot S_y \Big\rangle_{L,0,\beta}=\Big\langle S_x\cdot S_y \Big\rangle_{0,\beta_c};
\end{align} 
\item[(B)]
When $\beta< \beta_c$, one can use \eqref{eq:compare0toPeriodic} to make the technique of Gaussian domination applicable, see Lemma \ref{LM:highTem} below. For the details, see the proof of Theorem \ref{THM:second} below.
\end{itemize}

Assuming Proposition \ref{Prop:limitDxy}, we are ready to prove Main Theorem \ref{MainTHM:first}.
\subsection{Proving Main Theorem \ref{MainTHM:first} by assuming Proposition \ref{Prop:limitDxy}}
Recall that our goal is to prove that $\Big\langle \cos(\theta_0)\Big\rangle_{+,\beta_c}=0$.

By the well-known results, at any inverse temperature $\beta>0,$
\begin{align}\label{eq:4IdIn}
    0\leq \big\langle \cos(\theta_0)\big\rangle_{+,\beta}^2=\big\langle \cos(\theta_x) \big\rangle_{+,\beta}  \big\langle \cos(\theta_y) \big\rangle_{+,\beta}\leq  \big\langle \cos(\theta_x) \cos(\theta_y) \big\rangle_{+,\beta}\leq \big\langle S_x\cdot S_y \big\rangle_{+,\beta},
\end{align} where, in the second step, we used the identity, for any fixed site $y$,
\begin{align*}
    \Big\langle \cos(\theta_0)\Big\rangle_{+,\beta}=  \Big\langle \cos(\theta_y) \Big\rangle_{+,\beta};
\end{align*} and in the third step we used Griffiths inequality, see e.g. \cite{Griffiths1967CorrelationsI, benassi2017correlation,FV2018}; in the last step, we used the identity $S_x\cdot S_y=\cos(\theta_x)\cos(\theta_y)+\sin(\theta_x)\sin(\theta_y)$ and that $\Big\langle \sin(\theta_x) \sin(\theta_y)\Big\rangle_{+,\beta}\geq 0$, see e.g. \cite{benassi2017correlation}.

We continue to control $\big\langle S_x\cdot S_y \big\rangle_{+,\beta}$ in \eqref{eq:4IdIn}, but at $\beta=\beta_c$. Here the important \eqref{eq:sxyp0}, together with \eqref{eq:4IdIn}, implies that, 
\begin{align}\label{eq:PlusToFree}
 \text{for any fixed sites}\ x \ \text{and}\ y,\   0\leq \Big\langle \cos(\theta_0)\Big\rangle_{+,\beta_c}^2\leq \Big\langle S_x\cdot S_y \Big\rangle_{0,\beta_c}.
\end{align}
On the other hand, Theorem \ref{THM:second} implies that there exist sequences $\big\{x_n\big\}_{n=1}^{\infty}$ and $\big\{y_n\big\}_{n=1}^{\infty}$ such that $$
\lim_{n\rightarrow \infty}\Big\langle S_{x_n}\cdot S_{y_n} \Big\rangle_{0,\beta_c}=0.$$ 
This and \eqref{eq:PlusToFree} imply the desired result
\begin{align}
    \Big\langle \cos(\theta_0)\Big\rangle_{+,\beta_c}=0.
\end{align}



\section{Proof of Theorem \ref{THM:second}}\label{sec:secondTHM}
To make the technique of Gaussian domination applicable, we consider the periodic boundary condition on a $3-$dimensional periodic lattice $\Omega_L:=\Big(\mathbb{Z}/(2L\mathbb{Z})\Big)^3$ with Hamilton $\tilde{H}_{L}$ defined as
\begin{align}\label{def:Periodic}
    \tilde{H}_{L}:=-\frac{1}{6}\sum_{k,l\in \Omega_{L},\ |k-l|=1}  S_{k} \cdot S_{l}
\end{align} where $S_k\in \mathbb{S}^1$ are functions periodic in $k\in \mathbb{Z}^3$, specifically, for any $j\in 2L\mathbb{Z}^3$,
\begin{align*}
    S_k=&S_{k+j}.
\end{align*}

In what follows we use the notation $\langle\  \cdot\ \rangle_{L,P,\beta}$ to denote the expectation with the periodic boundary condition.

Studying the periodic lattice is helpful for the present problem: Apply the Ginibre's inequality, see \cite{Ginibre1970}, to obtain, 
\begin{align}\label{eq:compare0toPeriodic}
    0\leq \Big\langle S_x\cdot S_y\Big\rangle_{L,0,\beta}\leq \Big\langle S_x \cdot S_y\Big\rangle_{L,P,\beta},
\end{align} provided that $L$ is large enough so that $|x|+|y|<L.$

What is left is to control $\Big\langle S_x \cdot S_y\Big\rangle_{L,P,\beta}$ by the technique of Gaussian domination. 

Define a function 
\begin{align}\label{def:Gxy}
    G(x,y):=\frac{1}{(2\pi)^3} \int_{[-\pi,\pi]^3} \frac{e^{ik\cdot (x-y)}}{1-\hat{J}(k)} d^3 k,
\end{align} where, by \eqref{def:Periodic}, $J(l)=\left\{
\begin{array}{cc}
     \frac{1}{6} & \text{if}\ |l|=1,\\
     0& \text{otherwise,}
\end{array} 
\right.$ and thus $$\hat{J}(k)=\frac{1}{3} \big(\cos(k_1)+\cos(k_2)+\cos(k_3)\big)$$ for $k=(k_1,k_2,k_3)\in \mathbb{R}^3$. Here $G(x,y)$ is well defined because $\frac{1}{1-\hat{J}(k)}>0$ and it has only one singularity at $|k|=0$, and this singularity is of the form $|k|^{-2}(3+o(1))$ and, thus, is integrable.

The result is the following:
\begin{lemma}\label{LM:highTem} When $\beta< \beta_c,$
\begin{align}
     \lim_{L\rightarrow \infty} \frac{1}{|B_n|^2}\sum_{x,y\in B_n}\Big\langle S_x \cdot S_y\Big\rangle_{L,P,\beta}\leq \frac{1}{2\beta}\frac{1}{|B_n|^2}\sum_{x,y\in B_n}G(x,y).
\end{align} Especially, $G$ is independent of $\beta$ and
\begin{align}\label{eq:LimGXY}
    \lim_{|x-y|\rightarrow \infty} G(x,y)=0.
\end{align}
\end{lemma}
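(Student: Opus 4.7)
The plan is to combine the Fourier representation of the torus two-point function with the infrared (Gaussian domination) bound of \cite{FSS76, FILS78}. Set $\hat{G}_L(k) := \sum_{z\in\Omega_L} e^{-ik\cdot z}\langle S_0\cdot S_z\rangle_{L,P,\beta}$ on the dual lattice $\Omega_L^{\ast}:=\tfrac{\pi}{L}\mathbb{Z}^3\cap(-\pi,\pi]^3$, and $\phi_{B_n}(k):=\tfrac{1}{|B_n|^2}\bigl|\sum_{x\in B_n}e^{-ik\cdot x}\bigr|^2$. Translation invariance on the torus together with Plancherel's identity yields
\begin{align*}
\frac{1}{|B_n|^2}\sum_{x,y\in B_n}\langle S_x\cdot S_y\rangle_{L,P,\beta}
  \;=\;\frac{1}{(2L)^3}\sum_{k\in \Omega_L^{\ast}}\phi_{B_n}(k)\,\hat{G}_L(k).
\end{align*}

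The next ingredient is the Gaussian domination estimate, established via reflection positivity of the periodic XY measure:
\begin{align*}
\hat{G}_L(k)\;\leq\;\frac{1}{2\beta(1-\hat{J}(k))},\qquad k\in\Omega_L^{\ast}\setminus\{0\}.
\end{align*}
The zero mode must be handled separately; its contribution equals $\hat{G}_L(0)/(2L)^3$, i.e.\ the finite-volume susceptibility divided by the volume. Since $\beta<\beta_c$, the susceptibility $\chi(\beta,L)=\sum_{z\in\Omega_L}\langle S_0\cdot S_z\rangle_{L,P,\beta}$ stays uniformly bounded in $L$ (by the usual characterization of the subcritical regime together with a Ginibre-type monotone comparison between torus and infinite-volume correlations), so this piece vanishes as $L\to\infty$.

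For the remaining modes, I would insert the infrared bound and pass from the Riemann sum to a Lebesgue integral:
\begin{align*}
\frac{1}{(2L)^3}\sum_{k\in\Omega_L^{\ast}\setminus\{0\}}\frac{\phi_{B_n}(k)}{2\beta(1-\hat{J}(k))}
 \;\longrightarrow\;\frac{1}{2\beta\,(2\pi)^3}\int_{[-\pi,\pi]^3}\frac{\phi_{B_n}(k)}{1-\hat{J}(k)}\,dk.
\end{align*}
This step is legitimate because $1-\hat{J}(k)=\tfrac{|k|^2}{6}(1+o(1))$ at the origin and $\phi_{B_n}\leq 1$, so the only singularity is of order $|k|^{-2}$, which is Lebesgue-integrable in three dimensions (this is the only place where $d=3$ is used). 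Expanding $\phi_{B_n}$ and swapping the order of summation and integration yields exactly $\tfrac{1}{2\beta|B_n|^2}\sum_{x,y\in B_n}G(x,y)$, completing the main inequality.

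For the decay of $G$, note that $G(x,y)=\widehat{f}(x-y)$ where $f(k):=\tfrac{1}{(2\pi)^3(1-\hat{J}(k))}\mathbf{1}_{[-\pi,\pi]^3}(k)$ lies in $L^1(\mathbb{R}^3)$ by the same integrability check; the Riemann--Lebesgue lemma then forces $G(x,y)\to 0$ as $|x-y|\to\infty$, while $\beta$-independence of $G$ is immediate from the defining formula \eqref{def:Gxy}. I expect the most delicate step to be justifying the vanishing of the $k=0$ contribution as $L\to\infty$; this rests on the uniform-in-$L$ finiteness of the susceptibility in the strictly subcritical regime, after which the remainder of the argument is classical Fourier analysis combined with the infrared bound.
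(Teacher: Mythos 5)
Your proposal reconstructs, in detail, exactly the argument the paper is pointing to: the paper offers no proof of Lemma \ref{LM:highTem} beyond the citation to \cite{FSS76,FILS78}, and your chain (Plancherel on the torus, the infrared bound from reflection positivity, Riemann-sum convergence to the integral defining $G$, Riemann--Lebesgue for \eqref{eq:LimGXY}) is the standard route and is essentially all correct. One cosmetic remark: for an $N$-component model the infrared bound for the full inner-product correlation is $\hat{G}_L(k)\leq N/(2\beta(1-\hat{J}(k)))$, so for the XY model ($N=2$) the natural constant is $1/\beta$ rather than $1/(2\beta)$; you have simply adopted the constant as stated in the lemma, and the discrepancy is immaterial for how the lemma is used.

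The one step I would push back on is your treatment of the zero mode. You justify its vanishing by asserting that the susceptibility $\chi(\beta,L)$ is uniformly bounded in $L$ for $\beta<\beta_c$ ``by the usual characterization of the subcritical regime.'' But $\beta_c$ is defined here through the vanishing of $m^*(\beta)$, not through finiteness of $\chi$; finiteness of the susceptibility throughout the subcritical phase is a sharpness-type statement that is far from automatic for the XY model, and you should not lean on it. Fortunately you do not need it: the $k=0$ contribution is exactly
\begin{align*}
\frac{\hat{G}_L(0)}{(2L)^3}\;=\;\Big\langle \Big|\tfrac{1}{(2L)^3}\textstyle\sum_{x\in\Omega_L}S_x\Big|^2\Big\rangle_{L,P,\beta},
\end{align*}
i.e.\ the long-range-order parameter on the torus, and all that is required is that this tends to $0$ as $L\to\infty$. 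That follows from the absence of long-range order for $\beta<\beta_c$, which one gets by comparing the periodic state to the plus state via Ginibre's inequality \cite{Ginibre1970} and using $\lim_{|x-y|\to\infty}\langle S_x\cdot S_y\rangle_{+,\beta}=(m^*(\beta))^2=0$. With the zero mode handled this way (and with $\lim$ read as $\limsup$ if one does not wish to address existence of the limit separately), the rest of your argument stands.
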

This is based on the works of Fr\"{o}hlich, Spencer, etc in \cite{FSS76,FILS78}, see also \cite{Biskup09,FV2018}. \eqref{eq:LimGXY} follows from \eqref{def:Gxy} and the fact that, as $|x-y|$ becomes large, the oscillation renders the integral small.

We continue to prove Theorem \ref{THM:second}

This and \eqref{eq:compare0toPeriodic} imply the desired property, for any $\beta<\beta_c,$
\begin{align}\label{eq:ShortRan}
 \lim_{L\rightarrow \infty} \frac{1}{|B_n|^2}\sum_{x,y\in B_n}\Big\langle S_{x}\cdot S_y\Big\rangle_{L,0,\beta}\leq \frac{1}{2\beta}\frac{1}{|B_n|^2}\sum_{x,y\in B_n}G(x,y).
\end{align}

Next, we increase $\beta$ and $L$ to $\beta_c$ and $\infty$, respectively. The limit exists by the following proposition.
\begin{proposition}
    For any fixed sites $x$, $y\in \mathbb{Z}^3$, $\langle S_x\cdot S_y\rangle_{L,0,\beta}$ is an increasing function in both $L$ and $\beta$ variables, and
\begin{align}\label{eq:TwoLimit}
\lim_{\beta \nearrow \beta_c}\lim_{L\rightarrow \infty}\langle S_x\cdot S_y\rangle_{L,0,\beta}=\lim_{L\rightarrow \infty}\lim_{\beta \nearrow \beta_c}\langle S_x\cdot S_y\rangle_{L,0,\beta}=\langle S_x\cdot S_y\rangle_{0,\beta_c}.
    \end{align}
\end{proposition}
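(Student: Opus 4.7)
The plan is to verify the two monotonicities separately, observe that together they force both iterated limits to exist as suprema, and then interchange the suprema using continuity in $\beta$ at fixed volume.

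For monotonicity in $L$, I would fix $\beta$ and compare the system on the box of side $L$ with the one on side $L' > L$ by turning on the ferromagnetic couplings $J_{k,l}$ on the new bonds from $0$ up to $1/6$. Since every interaction is of the plane-rotator ferromagnetic form $S_k \cdot S_l = \cos(\theta_k - \theta_l)$, Ginibre's first inequality shows that introducing an additional such bond only increases any correlator of the form $\langle \cos(\sum_i n_i \theta_i) \rangle$, and in particular $\langle S_x \cdot S_y \rangle_{L,0,\beta}$. The paper's convention $S_k \equiv 0$ on $\partial \mathcal{L}$ is harmless here: bonds to the zero boundary contribute $J_{k,l}\, S_k \cdot 0 = 0$, so the $L$-system is recovered inside the $L'$-system by setting the relevant couplings to $0$.

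For monotonicity in $\beta$ at fixed $L$, the finite-volume expectation is real-analytic in $\beta$, and differentiation under the integral yields
\begin{align*}
\frac{d}{d\beta} \langle S_x \cdot S_y \rangle_{L,0,\beta} = \sum_{k, l} J_{k, l} \Big[ \langle (S_x \cdot S_y)(S_k \cdot S_l) \rangle_{L,0,\beta} - \langle S_x \cdot S_y \rangle_{L,0,\beta} \langle S_k \cdot S_l \rangle_{L,0,\beta} \Big],
\end{align*}
and each truncated two-point correlation in the bracket is nonnegative by Ginibre's second inequality. Since $|S_x \cdot S_y| \leq 1$, the double sequence $a_{L,\beta} := \langle S_x \cdot S_y \rangle_{L,0,\beta}$ is monotone and bounded in both variables, so both iterated limits exist. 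Writing each limit as a supremum and exchanging (automatic for monotone quantities),
\begin{align*}
\lim_{\beta \nearrow \beta_c} \lim_{L \to \infty} a_{L,\beta} = \sup_{\beta < \beta_c} \sup_L a_{L,\beta} = \sup_L \sup_{\beta < \beta_c} a_{L,\beta} = \sup_L a_{L,\beta_c} = \langle S_x \cdot S_y \rangle_{0,\beta_c},
\end{align*}
where the penultimate equality uses continuity of $\beta \mapsto a_{L,\beta}$ at fixed $L$ (dominated convergence applied to the explicit finite integral defining the expectation) and the last uses the defining infinite-volume limit \eqref{def:expectation}. The reverse iterated order is handled by the same chain with the inner $\beta$-limit collapsed first.

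The main obstacle is making the Ginibre-based $L$-monotonicity airtight under the paper's specific free boundary convention, since one must phrase precisely the sense in which freeing previously-frozen boundary spins (turning $S_k = 0$ into genuine $\mathbb{S}^1$-integration) is monotone. This is cleanest as an interpolation of couplings inside one large box rather than a naive inclusion of systems of different sizes; the remaining steps are routine bookkeeping.
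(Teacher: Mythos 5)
Your proposal is correct and follows essentially the same route as the paper: the paper's proof likewise invokes Ginibre's inequalities for monotonicity in both $L$ and $\beta$, plus continuity of the finite-volume expectation in $\beta$, and then exchanges the resulting monotone limits. You simply spell out the coupling-interpolation and supremum-exchange details that the paper leaves as ``well-known.''
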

\begin{proof}
The proof is well-known, thus we only state some key steps. Here we need two key observations: by Ginibre's inequality, see e.g. \cite{Ginibre1970,Griffiths1967CorrelationsI, benassi2017correlation, FV2018},
$\langle S_x\cdot S_y\rangle_{L,0,\beta}$ is an increasing function in the variables $L$ and $\beta;$ for any fixed $L$, $\langle S_x\cdot S_y\rangle_{L,0,\beta}$ is uniformly continuous in the variable $\beta.$
\end{proof}

We are ready to prove Theorem \ref{THM:second}. 

\eqref{eq:TwoLimit}, \eqref{eq:ShortRan} and \eqref{eq:LimGXY} imply the desired result
\begin{align}
   \lim_{n\rightarrow \infty}\frac{1}{|B_n|^2}\sum_{x,y\in B_n}\langle S_x\cdot S_y\rangle_{0,\beta_c}  =0.
\end{align}

The proof is complete.

\section{Proof of Proposition \ref{Prop:limitDxy}}\label{sec:PropDxy}
Our general strategy is to show that, under Assumption \ref{assu:uniqClus}, when $L\rightarrow \infty$, a properly defined switching lemma will help us to cancel ``almost all" the terms in $D_{1,x,y}$ and $D_{2,x,y}$.

We start with establishing a switching lemma.

In preparation for that, we define
\begin{align}
    D_{1,x,y}=E_{1,x\rightarrow y}-E_{2, y\rightarrow x}
\end{align} where $E_{1,x\rightarrow y}$ and $E_{2, y\rightarrow x}$ are defined as
\begin{align*}
   E_{1,x\rightarrow y}:= &\frac{\tilde{F}_{\delta,L}(x\rightarrow y)Z_{0,L}}{\tilde{Z}_{\delta,L}Z_{0,L}},\\
   E_{2, y\rightarrow x}:=&   \frac{ \tilde{Z}_{\delta,L}\ F_{0,L}(y\rightarrow x)}{\tilde{Z}_{\delta,L}Z_{0,L}}.
\end{align*}

To rewrite $E_{1,x\rightarrow y}$ we use the expressions of $\tilde{F}_{\delta,L}(x\rightarrow y)$ and $Z_{0,L}$ in \eqref{eq:fdeltal01}, \eqref{eq:fdeltal02}, \eqref{eq:fdeltal1} and \eqref{eq:fdeltal2}, to find
\begin{align}\label{eq:FdLZL1}
\begin{split}
 E_{1,x\rightarrow y}&=\frac{1}{2 \tilde{Z}_{\delta,L}Z_{0,L} }\sum_{
    \partial{\bf{n}}_{\delta}=\{x\rightarrow y\},\
    \partial{{\bf{n}}_0}=\emptyset
    } (\beta{\bf{J}})^{{\bf{n}}_{\delta}+ {\bf{n}}_{0}}\frac{1}{({\bf{n}}_{\delta})!}
    \frac{1}{(\bf{n}_{0})!}\\
    &=\sum_{{\bf{n}}({\bf{n}}_{\delta},{\bf{n}}_{0}):\ 
    \partial{\bf{n}}_{\delta}=\{x\rightarrow y\};\ 
    \partial{\bf{n}}_{0}=\emptyset
    } V_{\bf{J},\bf{n}({\bf{n}_{\delta},\bf{n}_{0}})} \Big|\mathcal{S}_{\delta,{\bf{n}}({\bf{n}_{\delta},\bf{n}_{0}}), L}(x\rightarrow y)\Big|,
\end{split}
\end{align} where 
$\bf{J}$, $\bf{n}_{\delta}$ and $\bf{n}_{0}$ are vectors defined in \eqref{def:vectors1} and \eqref{def:vectors2}; $\bf{n}({\bf{n}}_\delta,{\bf{n}}_0)$ is a nonnegative-integer-valued vector defined as follows: for any two nearest neighbors $a,\ b\in \mathbb{Z}^3$ on the lattice, suppose that $\bf{n}_{\delta}$ and $\bf{n}_0$ contain $n_{\delta,1}$ and $n_{0,1}$ $a\rightarrow b$ edges and $n_{\delta,2}$ and $n_{0,2}$ $b\rightarrow a$ edges, respectively, then
\begin{align}\label{def:nndeltan0}
    {\bf{n}({\bf{n}}_\delta,{\bf{n}}_0)}\Big|_{(a,b)}:=n_{\delta,1}+n_{0,1}+n_{\delta,2}+n_{0,2};
\end{align}
and $\mathcal{S}_{\delta,{\bf{n}}({\bf{n}_{\delta},\bf{n}_{0}}), L}(x\rightarrow y)$ is a set of directed graphs defined as
\begin{align}\label{def:sdnn}
\begin{split}
    \mathcal{S}_{\delta,{\bf{n}}({\bf{n}_{\delta},\bf{n}_{0}}), L}(x\rightarrow y):=\Big\{ \Big(\mathcal{G}_{\bf{n}_{\delta}}(x\rightarrow y),\mathcal{G}_{\bf{n}_0}\Big)\   \Big| & \ \mathcal{G}_{\bf{n}_{\delta}},\mathcal{G}_{\bf{n}_0}\ \text{are graphs formed by} \ \bf{n}_{\delta}\ \text{and}\ \bf{n}_0, \\
    &\text{and}\
    \partial{\bf{n}}_{\delta}=\{x\rightarrow y\},\ 
    \partial{\bf{n}}_{0}=\emptyset\Big\},
\end{split}
\end{align}
and the integer $$\Big|\mathcal{S}_{\delta,{\bf{n}}({\bf{n}_{\delta},\bf{n}_{0}}),L }(x\rightarrow y)\Big|=\frac{({\bf{n}({\bf{n}}_\delta,{\bf{n}}_0)} ) !}{(\bf{n}_{\delta})!\ (\bf{n}_0)!}$$ is interpreted as the total number of directed graphs in the set: at each nearest neighboring sites $a$ and $b$, there are in total $\frac{(n_{\delta,1}+n_{0,1}+n_{\delta,2}+n_{0,2})!}{(n_{\delta,1})!\ (n_{0,1})!\ (n_{\delta,2})!\ (n_{0,2})!}$ ways of distributing $n_{\delta,1}+n_{0,1}+n_{\delta,2}+n_{0,2}$ edges; and the positive number
$$V_{\bf{J},\bf{n}({\bf{n}_{\delta},\bf{n}_{0}})}:=\frac{1}{2 \tilde{Z}_{\delta,L}Z_{0,L} }(\beta{\bf{J}})^{{\bf{n}}({\bf{n}}_\delta,{\bf{n}}_0)} \frac{1}{ (\bf{n}({\bf{n}}_\delta,{\bf{n}}_0))! }$$ is considered the weight assigned to each graph in the set $ \mathcal{S}_{\delta,{\bf{n}}({\bf{n}_{\delta},\bf{n}_{0}}),L}(x\rightarrow y)$.

Similarly, for $E_{2,y\rightarrow x}$,
\begin{align}\label{eq:FdLZL2}
\begin{split}
E_{2, y\rightarrow x}
   &=\frac{1}{2 \tilde{Z}_{\delta,L}Z_{0,L} }\sum_{
    \partial{\bf{n}}_{0}=\{y\rightarrow x\},\ 
    \partial{\bf{n}_{\delta}}=\emptyset
    } (\beta{\bf{J}})^{{\bf{n}}_{\delta}+{\bf{n}}_{0}}\frac{1}{({\bf{n}}_{\delta})!}
    \frac{1}{(\bf{n}_{0})!}\\
    &=\sum_{ {\bf{n}}({\bf{n}_{\delta},\bf{n}_{0}}):\ 
    \partial{\bf{n}}_{0}=\{y\rightarrow x\},\
    \partial{\bf{n}}_{\delta}=\emptyset
    } V_{\bf{J},{\bf{n}}({\bf{n}_{\delta},\bf{n}_{0}}) } \Big|\mathcal{S}_{0,{\bf{n}}({\bf{n}_{\delta},\bf{n}_{0}}),L}(y\rightarrow x)\Big|.
\end{split}
\end{align} 
Here the terms are interpreted similarly to the corresponding ones in \eqref{eq:FdLZL1}.

To illustrate the difficulties, we present a preliminary version of path switching.

For a fixed $\delta-$site-avoiding $x\rightarrow y$ path $\mathcal{P}=\mathcal{P}(x\rightarrow y)$, we define two sets of graphs
\begin{align*}
    A_{x\rightarrow y, \mathcal{P}}:=&\Big\{  \Big(\Phi_{\delta}(x\rightarrow y),\Phi_{0}\Big) \ \Big| \ \mathcal{P}\subset \Big(\Phi_{\delta}(x\rightarrow y),\Phi_{0}\Big) \in \bigcup_{{\bf{n}}({\bf{n}_{\delta},\bf{n}_{0}})  }\mathcal{S}_{\delta,{\bf{n}}({\bf{n}_{\delta},\bf{n}_{0}}) , L}(x\rightarrow y)\Big\},\\
    B_{y\rightarrow x, \bar{\mathcal{P}} }:=&\Big\{  \Big(\Psi_{\delta},\Psi_{0}(y\rightarrow x)\Big)\ \Big|\ \bar{\mathcal{P}}\subset \Big(\Psi_{\delta},\Psi_{0}(y\rightarrow x)\Big) \in \bigcup_{{\bf{n}}({\bf{n}_{\delta},\bf{n}_{0}})  }\mathcal{S}_{0,{\bf{n}}({\bf{n}_{\delta},\bf{n}_{0}}) , L}(y\rightarrow x)\Big\}.
\end{align*} Here $\bar{\mathcal{P}}$ is the inverse of the directed path $\mathcal{P}.$

Given these two sets of graphs, we can define a bijection, denoted by
\begin{align}\label{def:Pswitch}
   A_{x\rightarrow y, \mathcal{P}} \xrightarrow{\mathcal{P}} B_{y\rightarrow x, \bar{\mathcal{P}} },
\end{align}
by switching the chosen path $\mathcal{P}$. The procedure was used in \cite{van2021elementary} and works as following:
For a fixed $\Big(\Phi_{\delta}(x\rightarrow y),\Phi_{0}\Big)\in A_{x\rightarrow y, \mathcal{P}}$ we switch the path $\mathcal{P}$ to obtain a new graph $\Big(\Psi_{\delta},\Psi_{0}(y\rightarrow x)\Big)\in B_{y\rightarrow x, \bar{\mathcal{P}}}$
such that,
\begin{align}\label{eq:switch}
    \begin{split}
        \Psi_{\delta}:=&\Big(\Phi_{\delta}\backslash \mathcal{P}\Big)\cup \Big(\overline{\Phi_{0}\cap \mathcal{P}}\Big);\\
        \Psi_{0}(y\rightarrow x):=& \Big(\Phi_{0}\backslash \mathcal{P}\Big)\cup \Big(\overline{\Phi_{\delta}\cap \mathcal{P}}\Big).
    \end{split}
\end{align}

Importantly, this mapping is reversible and is between graphs of equal weights.

In summary, we obtain the following results:
\begin{lemma}\label{LM:biBeforePairing}
    For a fixed $\delta-$site-avoiding $x\rightarrow y$ path $\mathcal{P}$, the switching defined in \eqref{eq:switch} establishes a bijection between $A_{x\rightarrow y, \mathcal{P}}$ and $B_{y\rightarrow x, \bar{\mathcal{P}}}$. And the mapping $\xrightarrow{\mathcal{P}}$ is between graphs of equal weight.
\end{lemma}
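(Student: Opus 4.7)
The plan is to verify the four desiderata in turn: (i) the output of the switch lies in the target set $B_{y\rightarrow x,\bar{\mathcal{P}}}$, (ii) the boundary conditions transform correctly, (iii) the switch is invertible, and (iv) weights are preserved. Throughout I interpret $\mathcal{P}$ as a multiset of directed edges picked out one-per-edge from the multigraph $\Phi_{\delta}\cup\Phi_{0}$, so that $\Phi_{\delta}\cap\mathcal{P}$ and $\Phi_{0}\cap\mathcal{P}$ form a partition of the copies of $\mathcal{P}$ in $\Phi_{\delta}\cup\Phi_{0}$; this removes the only notational ambiguity in the set-theoretic formulas \eqref{eq:switch}.

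First, I would check that $\bar{\mathcal{P}}\subset\Psi_{\delta}\cup\Psi_{0}$. Indeed, every directed edge $e\in\mathcal{P}$ either lies in $\Phi_{\delta}\cap\mathcal{P}$, in which case its reverse $\bar{e}$ appears in $\Psi_{0}$ by construction, or it lies in $\Phi_{0}\cap\mathcal{P}$, in which case $\bar{e}\in\Psi_{\delta}$; either way $\bar{e}\in\Psi_{\delta}\cup\Psi_{0}$ and hence $\bar{\mathcal{P}}\subset\Psi_{\delta}\cup\Psi_{0}$. For the boundary conditions I would argue vertex-by-vertex. At an internal vertex $v$ of $\mathcal{P}$ the path contributes exactly one incoming and one outgoing edge; reversing those edges and reassigning them between the two graphs exchanges the in- and out-degree contributions from $\mathcal{P}$ in both $\Phi_{\delta}$ and $\Phi_{0}$, so the net imbalance at $v$ is unchanged in the sum $\Psi_{\delta}\cup\Psi_{0}$ but the imbalance in $\Psi_{\delta}$ alone drops to zero while an equal and opposite imbalance appears in $\Psi_{0}$. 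Carrying out the same accounting at $x$ and $y$ shows that the source/sink pair $\{x\rightarrow y\}$ originally carried by $\Phi_{\delta}$ is transferred to $\Psi_{0}$ as $\{y\rightarrow x\}$, giving $\partial\Psi_{\delta}=\emptyset$ and $\partial\Psi_{0}=\{y\rightarrow x\}$ as required.

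For invertibility, I would define the reverse switch by the same formula \eqref{eq:switch} with $\mathcal{P}$ replaced by $\bar{\mathcal{P}}$, applied to pairs $(\Psi_{\delta},\Psi_{0})\in B_{y\rightarrow x,\bar{\mathcal{P}}}$. A direct unwinding of the set operations, using $\overline{\overline{E}}=E$, shows that
\begin{align*}
\big((\Psi_{\delta}\setminus\bar{\mathcal{P}})\cup\overline{\Psi_{0}\cap\bar{\mathcal{P}}}\,,\;(\Psi_{0}\setminus\bar{\mathcal{P}})\cup\overline{\Psi_{\delta}\cap\bar{\mathcal{P}}}\big)=(\Phi_{\delta},\Phi_{0}),
\end{align*}
so the switch is its own inverse up to path reversal, and hence a bijection between $A_{x\rightarrow y,\mathcal{P}}$ and $B_{y\rightarrow x,\bar{\mathcal{P}}}$.

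The equal-weight claim is the cleanest step: the weight $V_{\mathbf{J},\mathbf{n}(\mathbf{n}_{\delta},\mathbf{n}_{0})}$ depends only on the combined count $\mathbf{n}(\mathbf{n}_{\delta},\mathbf{n}_{0})|_{(a,b)}$ which, per \eqref{def:nndeltan0}, sums $a\!\to\!b$ and $b\!\to\!a$ edges in both $\mathbf{n}_{\delta}$ and $\mathbf{n}_{0}$. At each nearest-neighbor pair $(a,b)$ the switch either removes an $a\!\to\!b$ copy from $\Phi_{\delta}$ and deposits a $b\!\to\!a$ copy in $\Psi_{0}$ (and the symmetric variant with $\Phi_{0},\Psi_{\delta}$), or it does nothing at that pair; in every case the contribution to $\mathbf{n}(\mathbf{n}_{\delta},\mathbf{n}_{0})|_{(a,b)}$ is preserved. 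Hence $V_{\mathbf{J},\mathbf{n}(\mathbf{n}_{\delta},\mathbf{n}_{0})}$ is invariant under the switch. The main obstacle in executing this plan is the bookkeeping at vertices of $\mathcal{P}$ when the path revisits a site or when the same edge $e$ has multiple copies split between $\Phi_{\delta}$ and $\Phi_{0}$; with the multiset convention above, each copy is tracked independently and the vertex-by-vertex accounting goes through unambiguously.
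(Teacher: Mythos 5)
Your proposal is correct and follows essentially the same route as the paper, which simply asserts after describing \eqref{eq:switch} that the map is reversible and weight-preserving: you verify reversibility by applying the switch along $\bar{\mathcal{P}}$, and you observe that the weight $V_{\mathbf{J},\mathbf{n}(\mathbf{n}_{\delta},\mathbf{n}_{0})}$ depends only on the total undirected edge count per bond, which the switch conserves. Your explicit multiset convention for $\Phi_{\delta}\cap\mathcal{P}$ and $\Phi_{0}\cap\mathcal{P}$ (and the resulting vertex-by-vertex degree bookkeeping) is a useful clarification that the paper leaves implicit, but it does not change the argument.
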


Now we discuss the limitation. In brief, the switching defined in \eqref{def:Pswitch} only works occasionally.
\begin{enumerate}
\item[(A1)] The first problem is that some graphs in $\bigcup_{{\bf{n}}({\bf{n}_{\delta},\bf{n}_{0}}) }\mathcal{S}_{\delta,{\bf{n}}({\bf{n}_{\delta},\bf{n}_{0}}), L}(x\rightarrow y)$ might not contain a ``switchable" $x\rightarrow y$ path $\mathcal{P}(x\rightarrow y)$, for example, every $\mathcal{P}(x\rightarrow y)$ path passes through the $\delta$-site. Thus, there is no path to switch to begin with! 

\item[(A2)] The second one is more fundamental. In brief, even if every graph has at least a path for us to switch, we still have difficulties finding a bijection, partly because we do not have a conservation law as \eqref{eq:conserv}.

It is easy to construct a ``bad" example: there exist two distinct $\delta-$site-avoiding $x\rightarrow y$-paths $\mathcal{P}$ and $\mathcal{Q}$, and two graphs $ \mathcal{G}=\Big(\mathcal{G}_{\delta }(x\rightarrow y),\mathcal{G}_0\Big) $ and $\mathcal{F}=\Big(\mathcal{F}_{\delta }(x\rightarrow y),\mathcal{F}_0\Big)$ satisfying
\begin{align}\label{eq:adverse1}
\begin{split}
\mathcal{P}\subset  \mathcal{G}_{\delta}(x\rightarrow y)\cup \mathcal{G}_0, \ & \text{but}\ 
\mathcal{P}\not\subset \mathcal{F}_{\delta}(x\rightarrow y)\cup \mathcal{F}_0;\\
\mathcal{Q}\not\subset  \mathcal{G}_{\delta}(x\rightarrow y)\cup \mathcal{G}_0, \ & \text{but}\ 
\mathcal{Q}\subset \mathcal{F}_{\delta}(x\rightarrow y)\cup \mathcal{F}_0,
\end{split}
\end{align} such that, after switching the paths $\mathcal{P}$ and $\mathcal{Q}$ in $\mathcal{G}$ and $\mathcal{F}$ respectively to obtain two new graphs, denoted by $\mathcal{G}^{(\mathcal{P})}$ and $\mathcal{F}^{(\mathcal{Q})}$, the following adverse situation occurs:
\begin{align}\label{eq:adverse2}
   \mathcal{G}^{(\mathcal{P})}= \mathcal{F}^{(\mathcal{Q})}.
\end{align} Consequently, a simple generalization of switching lemma for an undirected graph does not work here!
\end{enumerate}

To overcome the first difficulty, we need Assumption \ref{assu:uniqClus}. For the detail see the discussions in Section \ref{sec:assu} below.

To overcome the second difficulty, we pair the edges in \eqref{eq:InEqualOut}-\eqref{eq:pairing} below. It is indeed an improvement, but it also creates a new difficulty.
This forces us to refine the pairing in Subsection \ref{sub:renormalize}.

\subsection{Edge pairing: a preliminary version}\label{sub:pairing}
Edge-pairing helps, at least partly, to overcome the second difficulty discussed above. It is not difficult to understand intuitively, because, after the edges are paired, any graph in $\bigcup_{{\bf{n}}({\bf{n}_{\delta},\bf{n}_{0}})  }\mathcal{S}_{\delta,{\bf{n}}({\bf{n}_{\delta},\bf{n}_{0}}), L}(x\rightarrow y)$ must have a paired $x\rightarrow y$-path. Provided that it does not pass through the $\delta-$site, then we switch this paired path to establish a bijection, and the adverse situations in \eqref{eq:adverse1} and \eqref{eq:adverse2} will not appear.

To prepare for pairing, we observe that, for any fixed site 
\begin{align}\label{eq:zNOTxy}
z\not\in\{x,y\},
\end{align}
the number of incoming edges is $\displaystyle\sum_{j:\sigma(z)>\sigma(j)}(n_{\delta,z\leftarrow j,}+n_{0,z\leftarrow j})$, and that of outgoing ones is $\displaystyle\sum_{j:\sigma(z)>\sigma(j)}(n_{\delta,z\rightarrow j}+n_{0,z\rightarrow j})$. These two numbers are equal: by \eqref{eq:inOut} for the $0-$boundary condition, which also holds for the plus-boundary condition, 
\begin{align}\label{eq:InEqualOut}
    \displaystyle\sum_{j:\sigma(z)>\sigma(j)}(n_{\delta,z\leftarrow j}+n_{0,z\leftarrow j})=\displaystyle\sum_{j:\sigma(z)>\sigma(j)}(n_{\delta,z\rightarrow j}+n_{\delta,z\rightarrow j}).
\end{align}
Therefore, at any site $z\not\in \{x,y\}$, there are 
\begin{align}\label{def:Psi}
\begin{split}
   \Psi_{\bf{n}({\bf{n}}_\delta,{\bf{n}_0})}(z) &:=\Big(\sum_{j}(n_{\delta,z\rightarrow j}+n_{0,z\rightarrow j})\Big)!\\
   &=\Big(\frac{1}{2}\sum_{j}(n_{\delta,z\rightarrow j}+n_{\delta,j\rightarrow z}+n_{0,z\rightarrow j}+n_{0,j\rightarrow z})\Big)!
\end{split}
\end{align} ways of pairing the incoming edges with the outgoing ones. Recall that $\bf{n}({\bf{n}}_\delta,{\bf{n}}_0)$ was defined in \eqref{def:nndeltan0}.

When $z=x$ or $z=y$, we choose not to pair the edges to avoid messy combinatorics. This leads to a minor complication: when ``switching", we not only switch a paired $x\rightarrow y$-path, but switch all the paired $x\rightarrow y$, $y\rightarrow x$ paths, and $x\rightarrow x$, $y\rightarrow y$ loops, see e.g. \eqref{eq:pairedPSwi} below.

Consequently, after pairing, \eqref{eq:FdLZL1} and \eqref{eq:FdLZL2} take new forms
\begin{align}
\begin{split}\label{eq:pairing}
    \frac{\tilde{F}_{\delta,L}(x\rightarrow y)Z_{0,L}}{\tilde{Z}_{\delta,L}Z_{0,L}}&=\sum_{ \bf{n}({\bf{n}}_\delta,{\bf{n}}_0):\ \partial{\bf{n}}_{\delta}=\{x\rightarrow y\},\ \partial{\bf{n}}_0=\emptyset } 
    W_{ \bf{n}({\bf{n}}_\delta,{\bf{n}}_0) ,L }\ \Big|\Gamma_{\delta, \bf{n}({\bf{n}}_\delta,{\bf{n}}_0) ,L }(x\rightarrow y)\Big|;\\
    \frac{F_{0,L}(y\rightarrow x)\tilde{Z}_{\delta,L}}{\tilde{Z}_{\delta,L}Z_{0,L}}&=\sum_{\bf{n}({\bf{n}}_\delta,{\bf{n}}_0):\ \partial{\bf{n}}_{0}=\{y\rightarrow x\},\ \partial{\bf{n}}_{\delta}=\emptyset } W_{ \bf{n}({\bf{n}}_\delta,{\bf{n}}_0) ,L }\ \Big|\Gamma_{0, \bf{n}({\bf{n}}_\delta,{\bf{n}}_0) ,L }(y\rightarrow x)\Big|,
\end{split}
\end{align} where $W_{ \bf{n}({\bf{n}}_\delta,{\bf{n}}_0) ,L }$ is considered the weight assigned to each paired graph formed by ${\bf{n}}_{\delta}$ and ${\bf{n}}_0$:
\begin{align}
    W_{ \bf{n}({\bf{n}}_\delta,{\bf{n}}_0)  ,L}:=\frac{1}{2}(\beta{\bf{J}})^{ \bf{n}({\bf{n}}_\delta,{\bf{n}}_0) }\frac{1}{ \bf{n}({\bf{n}}_\delta,{\bf{n}}_0)! }\frac{1}{\Psi_{ \bf{n}({\bf{n}}_\delta,{\bf{n}}_0) }}\frac{1}{\tilde{Z}_{\delta,L}Z_{0,L}},
\end{align}
$\Psi_{ \bf{n}({\bf{n}}_\delta,{\bf{n}}_0) }$ is a positive integer defined in terms of $\Psi_{ \bf{n}({\bf{n}}_\delta,{\bf{n}}_0) }(z)$ in \eqref{def:Psi},
\begin{align*}
\Psi_{ \bf{n}({\bf{n}}_\delta,{\bf{n}}_0) }:=&\prod_{z\not\in\{ x,\ y\}}\Psi_{ \bf{n}({\bf{n}}_\delta,{\bf{n}}_0) }(z);
\end{align*}  $\Gamma_{\delta,  \bf{n}({\bf{n}}_\delta,{\bf{n}}_0) ,L  }(x\rightarrow y)$ is a set of paired graph defined as
\begin{align*}
    \Gamma_{\delta, \bf{n}({\bf{n}}_\delta,{\bf{n}}_0) , L}(x\rightarrow y):=\Big\{ & 
    G_{{\bf{n}}_{\delta},{\bf{n}}_{0},L}(x\rightarrow y)\ \text{is a paired graph formed by} \ \bf{n}_{\delta} \ \text{and}\ \bf{n}_0;\\ 
    & \hspace{0.5 cm} \partial{\bf{n}}_{\delta}=\{x\rightarrow y\};\ \partial{\bf{n}}_0=\emptyset
    \Big\}
\end{align*} and
\begin{align*}
    \Big|\Gamma_{\delta, \bf{n}({\bf{n}}_\delta,{\bf{n}}_0) ,L }(x\rightarrow y)\Big|:=&\text{the total number of paired graphs in}\ \Gamma_{\delta, \bf{n}({\bf{n}}_\delta,{\bf{n}}_0) , L}(x\rightarrow y)\\
    =&\sum_{{\bf{n}}({\bf{n}}_\delta,{\bf{n}}_0) :\ \partial {\bf{n}}_\delta=\{x\rightarrow y\},\ \partial {\bf{n}}_0=\emptyset   } \frac{ \Big(\bf{n}(\bf{n}_{\delta},\bf{n}_{0}) \Big)! }{(\bf{n}_{\delta})!\ (\bf{n}_0)!}\ \Psi_{ \bf{n}({\bf{n}}_\delta,{\bf{n}}_0) }.
\end{align*} 

$\Gamma_{0, \bf{n}({\bf{n}}_\delta,{\bf{n}}_0) ,L }(y\rightarrow x)$ in the second line of \eqref{eq:pairing} is interpreted similarly, and
$$\Big|\Gamma_{0, \bf{n}({\bf{n}}_\delta,{\bf{n}}_0) ,L }(y\rightarrow x)\Big|=\sum_{ {\bf{n}}({\bf{n}}_\delta,{\bf{n}}_0):\ \partial {\bf{n}}_\delta=\emptyset,\ \partial {\bf{n}}_0=\{y\rightarrow x\}    
 }\frac{ \Big(\bf{n}(\bf{n}_{\delta},\bf{n}_{0}) \Big)! }{(\bf{n}_{\delta})!\ (\bf{n}_0)!}\Psi_{ \bf{n}({\bf{n}}_\delta,{\bf{n}}_0)  }.$$

It is not difficult to prove that, when $L\rightarrow \infty$, the limit exists for paired graphs in \eqref{eq:pairing} with weights. We choose to skip the details here.

The following definition of a projective map will facilitate later discussions. A (unpaired) graph might generate different paired ones. For identification we define a projection, denoted by $\Pi$, to project a paired graph to its underlying unpaired one. 
\begin{definition}\label{def:projective}
    $\Pi$ is the projective map that maps a paired graph to the corresponding unpaired one.
\end{definition}

Now we discuss the advantages of pairing and its limitations.
 
Since $x$ and $y$ are the only source and sink respectively, any paired graph generated by $\frac{\tilde{F}_{\delta,L}(x\rightarrow y) Z_{0,L}}{{\tilde{Z}_{\delta,L}Z_{0,L}}}$ in \eqref{eq:pairing} must contain at least one paired $x\rightarrow y$ path. 

Before ``switching", we define a notion of a switchable graph $P(x\rightarrow y)$. Here we emphasize that we switch a graph $P(x\rightarrow y)$, instead of a path. The reason is that: Because we do not pair edges at the sites $x$ and $y$, we switch all the paths and loops connecting $x$ and $y$ in order to avoid some messy combinatorics. 
\begin{definition}\label{def:switchable}
Let $P=P(x\rightarrow y)$ be a fixed set of paired paths $x\rightarrow y$, $y\rightarrow x$, and paired loops $x\rightarrow x$ and $y\rightarrow y$, except that, at the sites $x$ and $y$, the edges are not paired. We say $P$ is a switchable paired $x\rightarrow y$-graph if these paths and loops avoid the $\delta$-site.
\end{definition}

To see the advantage of edge pairing, we fix an switchable paired graph $P=P(x\rightarrow y)$, and define two subsets of $\displaystyle\bigcup_{ {\bf{n}}({\bf{n}}_{\delta},{\bf{n}}_{0})}\Gamma_{\delta, {\bf{n}}({\bf{n}}_{\delta},{\bf{n}}_{0}) , L}$ and $\displaystyle\bigcup_{ {\bf{n}}({\bf{n}}_{\delta},{\bf{n}}_{0})  }\Gamma_{0, {\bf{n}}({\bf{n}}_{\delta},{\bf{n}}_{0}) ,L }$:
\begin{align*}
    \tilde{A}_{x\rightarrow y, P}:=\Big\{G_{\delta}(x\rightarrow y)&\Big|\ G_{\delta}(x\rightarrow y)\in \bigcup_{ {\bf{n}}({\bf{n}}_{\delta},{\bf{n}}_{0}) }\Gamma_{\delta, {\bf{n}}({\bf{n}}_{\delta},{\bf{n}}_{0}) , L}(x\rightarrow y),\\ & \text{ and all the paired paths and loops connected to} \ x\ \text{or}\ y\ \text{are in}\ P\Big\};
    \\
    \tilde{B}_{y\rightarrow x, \bar{P}}:=\Big\{G_{0}(y\rightarrow x)&\Big|\ G_{0}(y\rightarrow x) \in \bigcup_{ {\bf{n}}({\bf{n}}_{\delta},{\bf{n}}_{0}) }\Gamma_{0, {\bf{n}}({\bf{n}}_{\delta},{\bf{n}}_{0}),L }(y\rightarrow x),\\ &\text{and all the paired paths and loops connected to} \ x\ \text{or}\ y\ \text{are in}\ \bar{P}\Big\}.
\end{align*} Here $\bar{P}$ is the reverse of $P.$

Between these two sets we switch the chosen paired graph $P$ to establish a bijection between $\tilde{A}_{x\rightarrow y, P}$ and $\tilde{B}_{y\rightarrow x, \bar{P}}$, denoted by
\begin{align}\label{eq:pairedPSwi}
 \tilde{A}_{x\rightarrow y, P}  \xrightarrow{P} \tilde{B}_{y\rightarrow x, \bar{P}}.
\end{align}
by the same procedure for switching described in \eqref{eq:switch}. 

It is important to note that the switching is between graphs of equal weights.

Based on these, it is easy to obtain the following results.
\begin{lemma}\label{LM:biFirstPairing}
    Between sets $\displaystyle\bigcup_{\text{ switchable}\ P}\tilde{A}_{x\rightarrow y, P}$ and $\displaystyle\bigcup_{ \text{switchable}\ P}\tilde{B}_{y\rightarrow x, \bar{P}}$, the naturally defined switching is a bijection between graphs of equal weight.
\end{lemma}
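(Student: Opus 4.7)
The strategy is to assemble the global bijection from the partial bijections $\xrightarrow{P}$ of \eqref{eq:pairedPSwi} by first verifying that each of the two unions is actually a disjoint union indexed by $P$, then applying $\xrightarrow{P}$ on each piece, and finally checking that the resulting surgery preserves the weight $W_{{\bf{n}}({\bf{n}}_\delta,{\bf{n}}_0), L}$.

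The first step is to show that every $G_\delta(x\rightarrow y)$ in $\bigcup_{\text{switchable}\ P}\tilde{A}_{x\rightarrow y, P}$ determines a unique switchable $P$. At each internal vertex $z\notin \{x,y\}$, every incoming edge of $G_\delta$ is matched to a unique outgoing edge by the pairing; the only unpaired endpoints are $x$ and $y$. Starting at $x$ or $y$ along one of the unpaired half-edges and repeatedly following the pairing at each internal vertex therefore yields a deterministic trajectory, which can terminate only upon arrival at $x$ or $y$. The collection of all such trajectories forms a well-defined set $P(G_\delta)$ of paired $x\rightarrow y$, $y\rightarrow x$ paths and paired $x\rightarrow x$, $y\rightarrow y$ loops. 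Since by hypothesis $G_\delta$ lies in the union over switchable $P$, the set $P(G_\delta)$ avoids $\delta$, hence is switchable in the sense of Definition \ref{def:switchable}. This gives a decomposition of $\bigcup_{\text{switchable}\ P}\tilde{A}_{x\rightarrow y, P}$ into disjoint pieces indexed by the recoverable invariant $P(G_\delta)$; the symmetric argument yields the analogous partition of the target.

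The second step is to apply on each piece the surgery \eqref{eq:switch}, with the paired graph $P$ in place of the path $\mathcal{P}$. Because $P$ avoids the $\delta$-site, the output $(\Psi_\delta,\Psi_0(y\rightarrow x))$ satisfies $\Psi_0(y\rightarrow x)\supset \bar{P}$, and by the same trajectory-following argument applied to the image, the paired structure of $\Psi_0(y\rightarrow x)$ incident to $\{x,y\}$ is exactly $\bar{P}$. Hence the image lies in $\tilde{B}_{y\rightarrow x,\bar{P}}$, and the same formula with $\bar{P}$ in place of $P$ produces a two-sided inverse, as in Lemma \ref{LM:biBeforePairing}. Because $\bar{P}$ determines $P$, the images of different $P$'s are disjoint, and the partial bijections assemble to the desired global bijection.

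The third step is weight preservation. Under $\xrightarrow{P}$, each edge $a\rightarrow b$ lying in $P\cap \Phi_\delta$ becomes a reversed edge $b\rightarrow a$ in $\Psi_0$, and vice versa; edges outside $P$ are untouched. Consequently the unordered multiplicity ${\bf{n}}({\bf{n}}_\delta,{\bf{n}}_0)|_{(a,b)} = n_{\delta,1}+n_{0,1}+n_{\delta,2}+n_{0,2}$ at every nearest-neighbor pair is preserved, and so is the total degree at every internal vertex. Therefore both ${\bf{n}}({\bf{n}}_\delta,{\bf{n}}_0)$ and $\Psi_{{\bf{n}}({\bf{n}}_\delta,{\bf{n}}_0)}$ are invariant under the surgery, which implies that $W_{{\bf{n}}({\bf{n}}_\delta,{\bf{n}}_0), L}$ is invariant. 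The main obstacle I expect is the first step: making precise that the pairing at internal vertices canonically reconstructs $P(G_\delta)$; this is exactly where pairing rules out the coincidence phenomenon of \eqref{eq:adverse1}--\eqref{eq:adverse2}, and it is the essential reason why the paired switching succeeds where the naive one of \eqref{eq:switchDir} fails.
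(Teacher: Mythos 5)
Your proposal is correct and follows essentially the same route as the paper, which simply asserts the lemma from the per-$P$ bijection \eqref{eq:pairedPSwi} and the equal-weight observation; you supply the details the paper omits, namely that the pairing at internal vertices canonically recovers $P$ from each graph (so the unions are disjoint and the partial bijections glue), and that the weight depends only on ${\bf{n}}({\bf{n}}_\delta,{\bf{n}}_0)$, which the switching preserves.
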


However, the procedure described above has some limits because it is not always useful.

The problem is that the sets $\displaystyle\bigcup_{\text{ switchable}\ P}\tilde{A}_{x\rightarrow y, P}$ and $\displaystyle\bigcup_{ \text{switchable}\ P}\tilde{B}_{y\rightarrow x, \bar{P}}$ might be only subsets of all the considered paired graphs in \eqref{eq:pairing} for two reasons:
\begin{itemize}
    \item[(B1)] The first one is the same as (A1) discussed earlier: even before pairing, some graphs do not contain a $\delta-$site-avoiding $x\rightarrow y$ path.
    \item[(B2)] The second is that, even if the underlying unpaired graph has a usable $x\rightarrow y$ path, it might happen that all the paired $x\rightarrow y$-paths pass through the $\delta$-site, and thus, the paired graph does not have a switchable path. 
    
\end{itemize}

We will circumvent the first difficulty in the Subsection \ref{sec:assu}, based on Assumption \ref{assu:uniqClus}. The second difficulty will be addressed in Subsection \ref{sub:renormalize}.


\subsection{Implication of Assumption \ref{assu:uniqClus}: Resolution of Difficulties [A1] and [B1]}\label{sec:assu}
We need Assumption \ref{assu:uniqClus} because it implies the following result: 
\begin{proposition}\label{prop:uniq}
Suppose Assumption \ref{assu:uniqClus} holds. 

Then, for both free- and plus-boundary conditions, when $L\rightarrow \infty,$ almost surely, every directed graph $\mathcal{G}$ generated by the random path representation is a union of finite loops. 

More precisely, with probability 1, $\mathcal{G}$ is a union of finite loops
\begin{align}\label{eq:unionLoop}
\mathcal{G}=\bigcup_{k\in D } \mathcal{O}_k,
\end{align} where $D$ is some set, $\mathcal{O}_k$ are finite loops.
\end{proposition}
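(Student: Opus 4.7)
The plan is to reduce the proposition, via the balance condition intrinsic to the random path representation, to a purely combinatorial statement that is then resolved by Assumption~\ref{assu:uniqClus}.

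At finite $L$, every configuration in the support of the weights appearing in \eqref{eq:fdeltal01} and \eqref{eq:fdeltal1} satisfies $\partial\mathbf{n} = \emptyset$ at every vertex (with the ghost $\delta$-site counted as a vertex in the plus case), so each realization $\mathcal{G}_L$ is a finite directed multigraph with balanced in- and out-degree everywhere. By the classical directed Eulerian decomposition of finite balanced multigraphs, such a $\mathcal{G}_L$ is automatically an edge-disjoint union of finite directed loops, and the conclusion is trivial at finite $L$. Passing to $L \to \infty$, I would argue that local finiteness at every fixed vertex is preserved (the Taylor coefficients $(\beta J)^n/n!$ give uniform-in-$L$ exponential tails on edge-multiplicities incident to a vertex), and that the integer constraint $\partial\mathbf{n} = \emptyset$ is inherited at every finite vertex, so $\mathcal{G}$ is a.s.\ a locally finite balanced directed multigraph.

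Next, I would apply Assumption~\ref{assu:uniqClus} to each of the countably many oriented nearest-neighbor edges of $\mathbb{Z}^3$ and use countable additivity of null events to obtain that, almost surely, \emph{every} edge of $\mathcal{G}$ lies on some finite directed loop of $\mathcal{G}$. The proposition now reduces to the following purely graph-theoretic lemma: a locally finite directed multigraph $H$ that is balanced at every vertex and in which every edge lies on a finite directed cycle admits a partition of its edge set into finite directed cycles. I would prove this lemma by applying Zorn's lemma to the inclusion-ordered family of edge-disjoint collections of finite directed cycles in $H$. For a maximal such collection $\mathcal{M}$, suppose for contradiction that some edge $e \in H$ is uncovered. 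Pick a finite cycle $C$ of $H$ through $e$ minimizing $|C \cap \bigcup \mathcal{M}|$, and let $C^{(1)},\ldots,C^{(m)}$ be the finitely many cycles of $\mathcal{M}$ sharing edges with $C$ (finitely many since $C$ is finite and $\mathcal{M}$ is edge-disjoint). The sub-multigraph $C \cup C^{(1)} \cup \cdots \cup C^{(m)}$ is finite and balanced, and its symmetric-difference $C \bigtriangleup \bigcup_j C^{(j)}$ is also a finite balanced subgraph. A finite Eulerian decomposition of the symmetric difference contains a finite cycle $C'$ through $e$; combined with appropriate replacement cycles drawn from the Eulerian decomposition of $\bigcup_j C^{(j)}$ that restore coverage of the remaining edges of the $C^{(j)}$'s, this yields an edge-disjoint collection contradicting either the maximality of $\mathcal{M}$ or the minimality of $|C \cap \bigcup \mathcal{M}|$.

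The main obstacle is the last step: verifying that the symmetric-difference rearrangement genuinely produces a finite cycle through $e$ that is edge-disjoint from the cycles of $\mathcal{M}$ we have chosen to retain, rather than merely trading coverage of $e$ for loss of coverage elsewhere. I expect to handle this by induction on the nonnegative integer $|C \cap \bigcup \mathcal{M}|$: each rearrangement strictly decreases this quantity (the newly produced cycle through $e$ shares strictly fewer edges with $\mathcal{M}$ than $C$ did), and balance plus finiteness of the auxiliary sub-multigraph are preserved, so the induction terminates with a finite cycle through $e$ disjoint from $\mathcal{M}$, closing the contradiction.
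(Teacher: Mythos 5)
Your reduction of the proposition to a deterministic graph-theoretic lemma is where the argument fails, and it fails in two places. First, the exchange step is broken: for \emph{directed} multigraphs the symmetric difference of balanced edge sets need not be balanced. If $C$ and $C^{(1)}$ are simple directed cycles sharing exactly one arc $u\to v$, then in $C\bigtriangleup C^{(1)}$ the vertex $u$ has in-degree $2$ and out-degree $0$, so there is no ``finite Eulerian decomposition of the symmetric difference'' to extract a cycle from. This is precisely the failure of the conservation law \eqref{eq:conserv} that the paper emphasizes in \eqref{eq:adverse1}--\eqref{eq:adverse2}; the mod-$2$ calculus of the undirected switching argument does not survive orientation.

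Second, and decisively, the lemma you reduce to is false, so no repair of the exchange step can save the reduction. Take vertex set $\mathbb{Z}$ (embedded as a straight line in $\mathbb{Z}^3$ if you wish) with, for every $i$, two parallel arcs $i\to i+1$ and one arc $i+1\to i$. This multigraph is locally finite, balanced (in-degree $=$ out-degree $=3$ at every vertex), and every arc lies on a directed $2$-cycle $i\to i+1\to i$; yet its arc set admits no partition into finite directed cycles, because any finite directed cycle crosses the cut between $\{\dots,i\}$ and $\{i+1,\dots\}$ equally often in each direction, while the cut contains $2$ forward arcs and $1$ backward arc. Hence ``almost surely every edge lies on a finite loop'' together with balance and local finiteness does not imply the conclusion: the probabilistic input of Assumption \ref{assu:uniqClus} cannot be stripped down to a pointwise property of a fixed realization. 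The paper's proof instead invokes a zero-one law for translation-invariant events and then \emph{iterates} the Assumption after deleting a finite loop $\mathcal{L}$, i.e.\ it reuses the Assumption for the law of $\mathcal{G}\setminus\mathcal{L}$ rather than arguing on a single deterministic graph. The same example shows why that deletion step carries real content: after removing the $2$-cycle formed by one copy of $0\to 1$ and the copy of $1\to 0$, the surviving copy of $0\to 1$ lies on no finite loop of the remaining graph. Any correct argument must therefore genuinely use that such configurations occur with probability zero, which your purely combinatorial formulation cannot see.
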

\begin{proof} 
A key property to be exploited is that, for any translating invariant event, its probability is either $0$ or $1$.
Here, the relevant translated invariant event is that, for any fixed edge $e_{a\rightarrow b}$ in any fixed graph $\mathcal{G}$, one can find finitely many edges so that they form a loop.
Assumption \ref{assu:uniqClus} implies that this will occur with probability 1.  

Consequently, almost surely, for any fixed edge $e_{a\rightarrow b}$, one can find finitely many edges $e_{b\rightarrow c_1}$, $e_{c_1\rightarrow c_2},$ $\cdots $ and $e_{c_k\rightarrow a}$ so that they form a finite loop $\mathcal{L}$ defined as $$\mathcal{L}:=\big\{ e_{a\rightarrow b},\ e_{b\rightarrow c_1},\ e_{c_1\rightarrow c_2},\ \cdots \ ,\ e_{c_k\rightarrow a}\big\}.$$

Delete this loop $\mathcal{L}$ from $\mathcal{G}$, and denote what is left by $\mathcal{G}\backslash \mathcal{L}$. By Assumption \ref{assu:uniqClus}, every edge in $\mathcal{G}\backslash \mathcal{L}$ is still contained in a finite loop, with probability 1.

By iterating this procedure, we prove the desired result: with probability 1, every graph is a union of finite loops. 
\end{proof}

Now, we apply Proposition \ref{prop:uniq} to obtain
the following useful observation.
\begin{lemma}\label{LM:GraWithSink}
Under Assumption \ref{assu:uniqClus}, as $L\rightarrow \infty$, almost surely, any unpaired graphs $\mathcal{G}_{\delta}(x\rightarrow y)$ and $\mathcal{G}_{0}(y\rightarrow x)$ generated by $\displaystyle\lim_{L\rightarrow \infty}\frac{\tilde{F}_{\delta,L}(x\rightarrow y)Z_{0,L}}{Z_{\delta,L}Z_{0,L}}$ and $\displaystyle\lim_{L\rightarrow \infty}\frac{F_{0,L}(y\rightarrow x)Z_{\delta,L}}{Z_{\delta,L}Z_{0,L}}$ can be decomposed into a union of finite loops $D_{\lambda}$ and a path $\mathcal{P}(x\rightarrow y)$ (or $\mathcal{Q}(y\rightarrow x)$ respectively) 
\begin{align}\label{eq:gDelta0}
\begin{split}
    \mathcal{G}_{\delta}(x\rightarrow y)=\Big(\bigcup_{\lambda\in \Upsilon_1} \mathcal{D}_{\lambda}\Big)\bigcup \mathcal{P}(x\rightarrow y);\\
    \mathcal{G}_{ 0}(y\rightarrow x)=\Big(\bigcup_{\lambda\in \Upsilon_2} \mathcal{D}_{\lambda}\Big)\bigcup \mathcal{Q}(y\rightarrow x).
\end{split}
\end{align}
\end{lemma}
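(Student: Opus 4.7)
The strategy is to peel off a single finite directed $x\to y$ path first and then invoke Proposition \ref{prop:uniq} for what remains. At every vertex $z\notin\{x,y\}$ the unpaired graph $\mathcal{G}_{\delta}(x\to y)$ satisfies $\#\{\text{in-edges at }z\}=\#\{\text{out-edges at }z\}$ by \eqref{eq:inOut}, while $x$ has exactly one more outgoing edge than incoming, and $y$ one more incoming than outgoing. So if we can extract a finite directed path $\mathcal{P}(x\to y)\subset \mathcal{G}_{\delta}(x\to y)$, the residual graph $\mathcal{G}_{\delta}(x\to y)\setminus \mathcal{P}(x\to y)$ is sourceless and sinkless; Proposition \ref{prop:uniq} then applies and decomposes it into the required union $\bigcup_{\lambda\in\Upsilon_{1}}\mathcal{D}_{\lambda}$.

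To construct $\mathcal{P}(x\to y)$ I would adjoin a formal edge $e_{y\to x}$ to $\mathcal{G}_{\delta}(x\to y)$, producing an auxiliary balanced directed graph $\widetilde{\mathcal{G}}:=\mathcal{G}_{\delta}(x\to y)\cup\{e_{y\to x}\}$ on the infinite lattice. Combined with the translation-invariance/zero-one argument that already drives Proposition \ref{prop:uniq}, Assumption \ref{assu:uniqClus} should force the distinguished edge $e_{y\to x}$ to lie in a finite loop $\mathcal{L}\subset\widetilde{\mathcal{G}}$ almost surely. Deleting the phantom edge from $\mathcal{L}$ produces the desired finite directed path $\mathcal{P}(x\to y)$ inside $\mathcal{G}_{\delta}(x\to y)$, and the argument for $\mathcal{G}_{0}(y\to x)$ is identical after swapping the roles of $x$ and $y$.

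Once $\mathcal{P}(x\to y)$ is in hand, I would note that removing it reduces the out-degree at $x$ by $1$, the in-degree at $y$ by $1$, and the in- and out-degree at every intermediate vertex by the same amount; hence $\mathcal{G}_{\delta}(x\to y)\setminus\mathcal{P}(x\to y)$ is sourceless and sinkless and falls under the hypothesis of Proposition \ref{prop:uniq}. Applying that proposition gives the finite-loop decomposition, and re-adjoining $\mathcal{P}(x\to y)$ yields \eqref{eq:gDelta0}.

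The main technical obstacle lies in justifying that Assumption \ref{assu:uniqClus} really controls the augmented graph $\widetilde{\mathcal{G}}$, since the distinguished edge at the fixed pair $(x,y)$ breaks translation invariance and $\widetilde{\mathcal{G}}$ is not literally of the form covered by the assumption. A robust way around this is to run the extraction on the finite lattice first: for each $L$ the existence of a finite $x\to y$ path is automatic from degree balance, and Assumption \ref{assu:uniqClus} enters only at the limit $L\to\infty$ to rule out edges of the residual graph from escaping to infinity. An equivalent route is a greedy construction — start at $x$, walk along outgoing edges, use Assumption \ref{assu:uniqClus} to detach any finite loop encountered (and record it in the $\mathcal{D}_{\lambda}$ collection), and continue until $y$ is reached; almost-sure termination follows from the same translation-invariant zero-one dichotomy that forbids the walk from wandering to infinity.
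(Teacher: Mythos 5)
Your proposal follows essentially the same route as the paper: the paper likewise augments $\mathcal{G}_{\delta}(x\rightarrow y)$ with a $y\rightarrow x$ connector (a finite lattice path $\mathcal{K}(y\rightarrow x)$ rather than your single formal edge $e_{y\rightarrow x}$, which sidesteps the issue of $x,y$ not being nearest neighbors), applies Proposition \ref{prop:uniq} to place that connector in a finite loop, deletes it to obtain $\mathcal{P}(x\rightarrow y)$, and then applies Proposition \ref{prop:uniq} again to the sourceless, sinkless remainder. The technical obstacle you flag — that the augmented graph is not literally one generated by the random path representation — is real but is passed over silently in the paper as well, so your argument is at least as complete as the original.
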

\begin{proof}
We start with making Proposition \ref{prop:uniq} relevant. Proposition \ref{prop:uniq} works for graphs without a source or sink, while the desired result is for graphs with a source and a sink.

For that purpose, we fix a finite path $\mathcal{K}(y\rightarrow x)=\big\{ e_{y\rightarrow a_1}, \ e_{a_1\rightarrow a_2},\ \cdots,\ e_{a_l\rightarrow x} \big\}$, and consider the graph $\mathcal{G}_{\delta}(x\rightarrow y)\cup \mathcal{K}(y\rightarrow x),$ which has no sink or source. This makes Proposition \ref{prop:uniq} applicable, which says that, with probability 1, the path $\mathcal{K}(y\rightarrow x)$ is contained in a finite loop $\mathcal{L}$. 

We find a desired path $\mathcal{P}(x\rightarrow y)$ in \eqref{eq:gDelta0} by deleting $\mathcal{Q}(y\rightarrow x)$ from $\mathcal{L}$:
    \begin{align}
        \mathcal{P}(x\rightarrow y):= \mathcal{L}\backslash \mathcal{Q}(y\rightarrow x).
    \end{align}

    What is left is to consider the graph $\mathcal{G}_{\delta}(x\rightarrow y)\backslash \mathcal{P}(x\rightarrow y),$ which has no source or sink. Apply Proposition \ref{prop:uniq} to find that it is of the form, with probability 1,
    \begin{align*}
        \mathcal{G}_{\delta}(x\rightarrow y)\backslash \mathcal{P}(x\rightarrow y)=\Big(\bigcup_{\lambda\in \Upsilon_1} \mathcal{D}_{\lambda}\Big).
    \end{align*} This directly implies the first identity of the desired \eqref{eq:gDelta0}.

    The second identity can be obtained almost identically, thus we choose to skip the detail.
\end{proof}


\subsection{Edge pairing and weight assignments}\label{sub:renormalize}
We start with some preparatory works. 

In \eqref{eq:pairing}, where $L<\infty$, we assigned weights for each graph. As $L\rightarrow \infty$ 
we must assign weights for sets of infinite graphs.

For that purpose, we fix a large integer $N$ and define a notion called ``partially paired graph in the region $|z|\leq N$", denoted by $PP_N.$ Later, we will use it to represent a set of paired infinite graphs, see Definition \eqref{def:repres} below. Recall the definition of the projective map $\Pi$ from Definition \ref{def:projective}.
\begin{definition}
    For a fixed large integer $N$ so that $|x|+|y|<N$, a partially paired graph $G_{N}$ in the region $|z|\leq N$ satisfies the following properties:
    \begin{itemize}
    \item[(1)] Its underlying unpaired graph, $\Pi(G_N)$, is $\Big(\mathcal{G}_{\delta}(x\rightarrow y), \mathcal{G}_0\Big)\Big|_{|z|\leq N}$ or $\Big(\mathcal{G}_{\delta}, \mathcal{G}_0(y\rightarrow x)\Big)\Big|_{|z|\leq N}$. Recall unpaired graphs $\Big(\mathcal{G}_{\delta}(x\rightarrow y), \mathcal{G}_0\Big)$ or $\Big(\mathcal{G}_{\delta}, \mathcal{G}_0(y\rightarrow x)\Big)$ from \eqref{def:sdnn}.
    \item[(2)] In the interior, at site $z$ with $|z|<N$ and $z\not\in \{x,y\}$, every incoming edge is paired with an outgoing one; 
    \item[(3)] At the sites $x$ and $y$, the edges are not paired;
    \item[(4)] On the boundary, at site $z$ with $|z|=N$, the edges are not paired. 
    \end{itemize}
    We denote this set of graphs by $PP_{N}$, i.e.
    \begin{align}\label{def:partPair}
        PP_{N}:=\Big\{G_{N}\ \Big|\ G_{N}\ \text{is a partially paired graph in the region }\ |z|\leq N\Big\}.
    \end{align}
\end{definition}
We remark that the condition (4) seems strange. But it will be needed in \eqref{eq:count} to assign weights to sets of graphs.

In the next definition, we use a fixed partially-paired finite graph to represent a set of paired infinite graphs. Recall the definition of the projective map $\Pi$ in \eqref{def:projective}.
\begin{definition}\label{def:repres}
For any $$G_{\delta,N}(x\rightarrow y),\ G_{0,N}(y\rightarrow x)\in PP_{N},$$ we use them to represent two sets of paired infinite graphs, specifically,
\begin{align}\label{eq:PartPaired1}
  \begin{split}
      \widetilde{G_{\delta,N}}(x\rightarrow y) :=\Big\{ F\ \Big|& \ F=F_{\delta}(x\rightarrow y) \ \text{is a paired infinite graph;}\\ & F\Big|_{|z|< N}=G_{\delta,N}(x\rightarrow y)\Big|_{|z|<N};
       \ \Pi(F)\Big|_{|z|\leq N}=\Pi\big(G_{\delta,N}(x\rightarrow y)\big) \Big\},
  \end{split}
\end{align}
and
\begin{align}\label{eq:PartPaired2}
  \begin{split}
      \widetilde{G_{0,N}}(y\rightarrow x) :=\Big\{ F\ \Big|& \ F=F_{0}(y\rightarrow x) \ \text{is a paired infinite graph;}\\ & F\Big|_{|z|< N}=G_{0,N}(y\rightarrow x)\Big|_{|z|<N};
      \ \Pi(F)\Big|_{|z|\leq N}=\Pi\big(G_{0,N}(y\rightarrow x)\big) \Big\}.
  \end{split}
\end{align}
\end{definition}

Based on the discussions above, \eqref{eq:FdLZL1} and \eqref{eq:FdLZL2} take new forms: as $L\rightarrow \infty$,
\begin{align}\label{eq:WeightPre}
\begin{split}
    \lim_{L\rightarrow \infty}\frac{\tilde{F}_{\delta,L}(x\rightarrow y)\ Z_{0,L}}{\tilde{Z}_{\delta,L}Z_{0,L}}=\sum_{G_{\delta,N}(x\rightarrow y)\in PP_N} C_{\widetilde{G_{\delta, N}}(x\rightarrow y) } ,\\
    \lim_{L\rightarrow \infty}\frac{\tilde{Z}_{\delta,L}\ F_{0,L}(y\rightarrow x)}{\tilde{Z}_{\delta,L}Z_{0,L}}=\sum_{G_{0,N}(y\rightarrow x)\in PP_N } C_{\widetilde{G_{0,N}}(y\rightarrow x) }
\end{split}
\end{align} where the nonnegative constants $C_{\widetilde{G_{\delta,N}} (x\rightarrow y) }$ and $C_{\widetilde{G_{0,N}}( y\rightarrow x) }$ are weights assigned to the sets, and they are naturally defined.

We are ready to state two important properties for the weights. To simplify the notations we use the following conventions:
\begin{align}\label{eq:conventionDelta0}
G_{\#,N}=\left\{
\begin{array}{cc}
   G_{\delta,N}(x\rightarrow y)  & \text{if}\ \#=\delta, \\
   G_{0,N}(y\rightarrow x)  & \text{if}\ \#=0,
\end{array}
\right.
\end{align}
and $F_{\#,N},\ K_{\#,N}$ are interpreted similarly.
\begin{itemize}
\item Let $G_{\#,N}$ and $F_{\#,N}$ be two partially paired graphs in the region $|z|\leq N$ satisfying $$\Pi(G_{\#,N} )=\Pi(F_{\#,N} ),$$ then 
\begin{align}\label{eq:ctilde}
    C_{\widetilde{G_{\#,N}} }=C_{\widetilde{F_{\#,N}} }.
\end{align} This is implied by \eqref{eq:pairing} because for graphs with the same $\bf{n}({\bf{n}}_{\delta},{\bf{n}}_0)$, the weights are equal.

\item The weight $C_{\widetilde{G_{\#,N}}}$ is defined for any chosen large integer $N$. 
To show that these definitions are consistent for different $N_1$ and $N_2$ we use the following identity: suppose that $N_2\geq N_1$,
\begin{align}\label{eq:consistency}
    C_{\widetilde{G_{\#,N_1}}}=\sum_{
    \begin{array}{cc}
    G_{\#,N_2}\ \big|_{|z|< N_1}&=G_{\#,N_1};\\
    \Pi (G_{\#,N_2})\ \big|_{|z|\leq N_1}&=\Pi(G_{\#,N_1})
    \end{array}
    } C_{\widetilde{G_{\#,N_2}}},
\end{align} where the notations $$G_{\#,N_2}\ \big|_{|z|<N_1}=G_{\#,N_1}$$ and $$\Pi (G_{\#,N_2})\ \big|_{|z|\leq N_1}=\Pi(G_{\#,N_1})$$ signify that, (1) when restricted to the region $|z|<N_1$, $G_{\#,N_2}$ is the same to $G_{\#,N_1}$; (2) in the region $|z|\leq N_1$, $\Pi (G_{\#,N_2})$ is the same to $\Pi(G_{\#,N_1}).$

\end{itemize}


\subsection{Refined pairing and weight assignments}
Until now, we paired edges indiscriminately. An adverse fact is that some graphs in the set $\widetilde{G_{\delta,N}}(x\rightarrow y) $ don't have a paired finite $x\rightarrow y$-path at all. Thus, there is no possible switching to begin with!

To overcome this difficulty, we notice that Lemma \ref{LM:GraWithSink} implies that, with probability 1, any (unpaired) graph has a finite $x\rightarrow y$ path. Based on this, to ensure there is always a ``switchable" paired $x\rightarrow y$-path, we refine the paring by introducing the notion of ``finitely-paired graphs".
\begin{definition}
For a fixed large integer $N$, a paired infinite graph $G(x\rightarrow y)$ is called a finitely-paired graph in the region $|z|\leq N$ if every edge in the region $|z|\leq N$ is paired into a finite loop or a finite path.
\end{definition}

To illustrate the ideas we use the following  example of finitely-paired graph in the region $|z|\leq N$, see Figure \ref{fig:my_image}.
  \begin{figure}[htbp]
           \centering
           \includegraphics[width=0.5\textwidth]{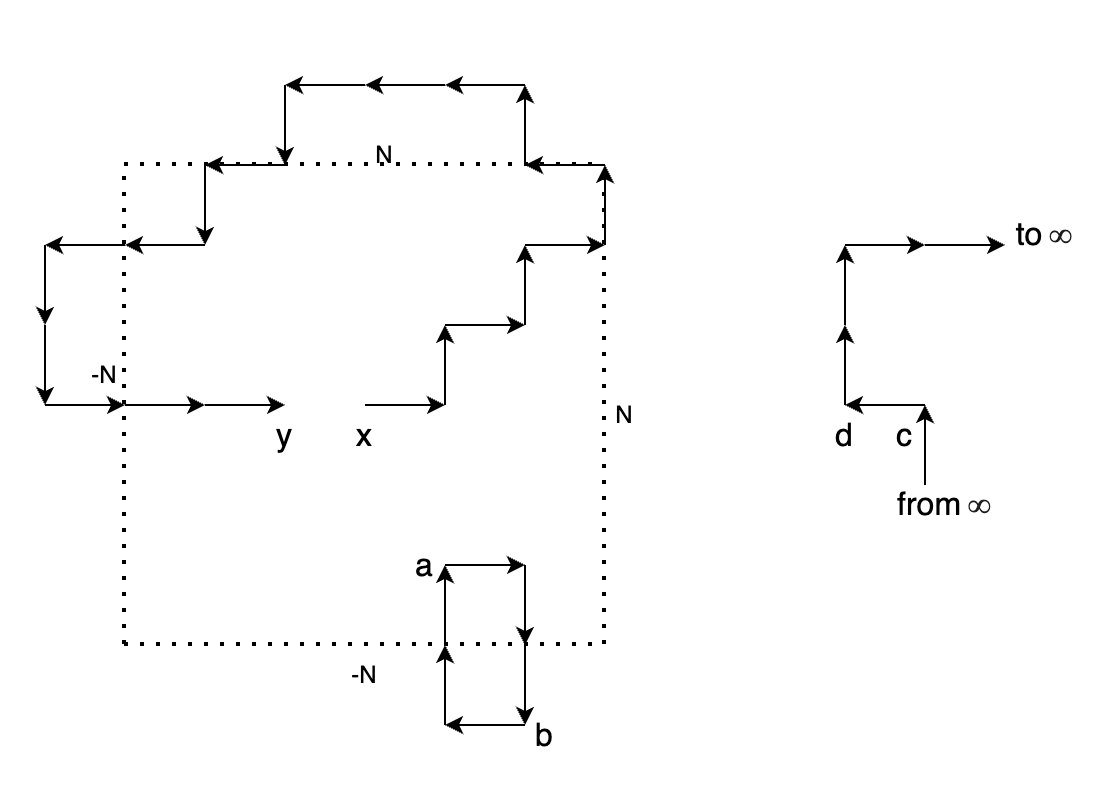}
           \caption{An example for finitely-paired graph}
           \label{fig:my_image}
    \end{figure}
In the figure part of the paired path $x\rightarrow y$ and paired loop $a\rightarrow b\rightarrow a$ are in the $|z|\leq N$ region, and the path and loop are of finite length. If the path $\infty\rightarrow c\rightarrow d\rightarrow \infty$ does not intersect with the region $|z|\leq N$, then we do not require that it is of finite length.

To simplify the notations we use the same conventions as in \eqref{eq:conventionDelta0}.
For a fixed partially paired finite graph $$G_{\#,N}=G_{\delta,N}(x\rightarrow y)\ \text{ or}\ G_{0,N}(y\rightarrow x),$$ we use it to represent a set of paired infinite graphs, denoted by $\widehat{G_{\#,N}}$:
\begin{align}
\begin{split}
    \widehat{G_{\#,N}}:=\Big\{ F\ \Big|&\ F=F_{\#}\ \text{is a paired infinite graph in } \ \mathbb{Z}^3; \ F\Big|_{< N}=G_{\#,N};\\ & \Pi(F)|_{\leq N}=\Pi(G_{\#,N});\ \ F\ \text{ is finitely-paired in the region} \ |z|\leq N.\Big\}.
\end{split}
\end{align}

We rewrite \eqref{eq:WeightPre} to assign weight for $\widehat{G_{\#,N}}$: for a fixed large $N,$
\begin{align}\label{eq:finitelyLooped}
\begin{split}
     \lim_{L\rightarrow \infty}\frac{\tilde{F}_{\delta,L}(x\rightarrow y)\ Z_{0,L}}{\tilde{Z}_{\delta,L}Z_{0,L}}=&\sum_{G_{\delta,N}(x\rightarrow y)\in PP_N } C_{\widetilde{G_{\delta,N}}(x\rightarrow y) }=\sum_{G_{\delta,N}(x\rightarrow y) \in PP_N} D_{ \widehat{G_{\delta,N}}(x\rightarrow y) },\\
     \lim_{L\rightarrow \infty}\frac{F_{0,L}(y\rightarrow x)\ \tilde{Z}_{\delta,L}}{\tilde{Z}_{\delta,L}Z_{0,L}}=&\sum_{G_{0,N}(y\rightarrow x)\in PP_N } C_{\widetilde{G_{0,N}}(y\rightarrow x) }=\sum_{G_{0,N}(y\rightarrow x)\in PP_N } D_{ \widehat{G_{0,N}}(y\rightarrow x) }
\end{split}
\end{align} where  $D_{\widehat{G_{\#,N}}}$
are weights assigned to $\widehat{G_{\#,N}}$, and are uniquely defined by the following two identities: 
\begin{itemize}
    \item[(B1)] 
$D_{\widehat{G_{\#,N}}}$ depends on $C_{\widetilde{G_{\#,N}}}$: for a fixed unpaired graph $\mathcal{K}_{\#,N}$ in the region $|z|\leq N,$
    \begin{align}\label{eq:uniform}
   \sum_{\Pi(G_{\#,N})=\mathcal{K}_{\#,N}} D_{\widehat{G_{\#,N}}}:=  \sum_{\Pi(G_{\#,N})=\mathcal{K}_{\#,N}} C_{\widetilde{G_{\#,N}}}.
\end{align} 

    \item[(B2)] Suppose that $G_{\#,N}$ and $F_{\#,N}$ are two partially paired graphs in the region $|z|\leq N$, and their underlying unpaired graphs agree, i.e. $$\Pi(G_{\#,N})=\Pi(F_{\#,N}),$$ then they are of the same weights:
    \begin{align}\label{eq:count}
        D_{\widehat{G_{\#,N} }}=D_{\widehat{F_{\#,N} }}.
    \end{align} 
\end{itemize}

The identities \eqref{eq:finitelyLooped}, \eqref{eq:uniform} and \eqref{eq:count} assign a unique finite weight to $D_{ \widehat{G_{\#,N}}}$ because, for each fixed $N$ and an unpaired graph $\mathcal{K}_{\#,N}$ in \eqref{eq:uniform}, there are only finitely many $\widehat{G_{\#,N}}$ satisfying $\Pi(G_{\#,N})=\mathcal{K}_{\#,N}$ .

We also need a consistency, similar to \eqref{eq:consistency},
\begin{lemma}
    The identities in \eqref{eq:finitelyLooped} are consistent for different $N$'s in the sense that for any $M>N$, 
    \begin{align}\label{eq:consist}
        D_{\widehat{G_{\#,N}}(x\rightarrow y) }=\sum_{
\begin{array}{cc}
        &W_{\#,M}\big|_{|z|<N}=G_{\#,N} \\ 
        &\Pi(W_{\#,M})\big|_{|z|\leq N}=\Pi(G_{\#,N}) 
\end{array}
        } D_{\widehat{W_{\#,M}}(x\rightarrow y) }.
    \end{align}
\end{lemma}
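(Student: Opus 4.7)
The plan is to reduce \eqref{eq:consist} to the already-established consistency \eqref{eq:consistency} for the $C$-weights, by first expressing $D$ explicitly in terms of $C$. Conditions \eqref{eq:uniform} and \eqref{eq:count} together pin down the $D$-weights uniquely: setting $\mathcal{K}_{\#,N}:=\Pi(G_{\#,N})$ and letting $k_N(\mathcal{K})$ be the (finite) number of pairings $G'\in PP_N$ with $\Pi(G')=\mathcal{K}$, condition \eqref{eq:count} forces all such $D_{\widehat{G'}}$ to be equal, and \eqref{eq:uniform} then yields
\begin{equation*}
 D_{\widehat{G_{\#,N}}}\;=\;\frac{1}{k_N(\mathcal{K}_{\#,N})}\sum_{G':\;\Pi(G')=\mathcal{K}_{\#,N}} C_{\widetilde{G'}}.
\end{equation*}

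Next I would substitute the already-proven $C$-consistency \eqref{eq:consistency} into the right-hand side, writing each $C_{\widetilde{G'}}$ as $\sum_W C_{\widetilde{W}}$ over $W\in PP_M$ extending $G'$. Since each such $W$ uniquely determines its restriction $W|_{|z|<N}$, the double sum collapses and I am left with $\frac{1}{k_N(\mathcal{K}_{\#,N})}\sum_{W} C_{\widetilde{W}}$ over all $W\in PP_M$ with $\Pi(W)|_{|z|\leq N}=\mathcal{K}_{\#,N}$. Grouping this sum by the extended unpaired graph $\mathcal{K}_{\#,M}:=\Pi(W)$ and applying the scale-$M$ analogues of (B1) and (B2) transforms the inner sum into $k_M(\mathcal{K}_{\#,M})\,D_{\widehat{W_{\#,M}}}$ for any representative $W_{\#,M}$.

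On the other side, the right-hand side of \eqref{eq:consist} groups similarly by $\mathcal{K}_{\#,M}$; for each $\mathcal{K}_{\#,M}$, the inner sum counts the number $p(\mathcal{K}_{\#,M})$ of pairings at the annular sites $N\leq|z|<M$ compatible with $\mathcal{K}_{\#,M}$, times the common $D$-weight (by (B2)). The matching of the two sides then hinges on the multiplicative factorization $k_M(\mathcal{K}_{\#,M})=k_N(\mathcal{K}_{\#,N})\cdot p(\mathcal{K}_{\#,M})$, which holds because pairings at distinct interior sites $z\notin\{x,y\}$ are chosen independently. Dividing through by $k_N(\mathcal{K}_{\#,N})$ then shows that both sides of \eqref{eq:consist} equal $\sum_{\mathcal{K}_{\#,M}} p(\mathcal{K}_{\#,M})\,D_{\widehat{W_{\#,M}}}$.

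The main obstacle is bookkeeping: one must carefully distinguish the restriction $W|_{|z|<N}$, which preserves pairings, from the unpaired-graph restriction $\Pi(W)|_{|z|\leq N}$, which does not; and one must track how sites at $|z|=N$, which are unpaired in $PP_N$ but become interior sites that must be paired in $PP_M$ (since $N<M$), contribute the extra combinatorial factor $p(\mathcal{K}_{\#,M})$ linking the two scales. Apart from this bookkeeping, the proof is essentially an exercise in interchanging sums and invoking \eqref{eq:consistency} together with the defining properties of $D$.
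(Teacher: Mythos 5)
Your argument is correct. Note first that the paper states this lemma without giving any proof at all, so the only available comparison is with the ingredients it supplies, namely \eqref{eq:uniform}, \eqref{eq:count} and the $C$-consistency \eqref{eq:consistency} --- which is exactly the material you use. Your bookkeeping checks out: the collapse of the double sum works because each $W\in PP_M$ with $\Pi(W)\big|_{|z|\leq N}=\mathcal{K}_{\#,N}$ determines a unique $G'\in PP_N$ over $\mathcal{K}_{\#,N}$ by restricting its pairings to $|z|<N$; and the factorization $k_M(\mathcal{K}_{\#,M})=k_N(\mathcal{K}_{\#,N})\cdot p(\mathcal{K}_{\#,M})$ holds because $k_M$ is the product of the local factors $\Psi(z)$ of \eqref{def:Psi} over $|z|<M$, $z\notin\{x,y\}$, and each factor with $|z|<N$ is determined by $\mathcal{K}_{\#,N}$ alone (every edge incident to such a $z$ lies in the region $|z|\leq N$). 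Two remarks. First, there is a substantial shortcut you missed: \eqref{eq:ctilde} says the $C$-weights themselves depend only on the underlying unpaired graph, so \eqref{eq:uniform} together with \eqref{eq:count} forces $D_{\widehat{G_{\#,N}}}=C_{\widetilde{G_{\#,N}}}$ termwise, and \eqref{eq:consist} then becomes literally \eqref{eq:consistency} with $N_1=N$, $N_2=M$; your route through $k_N$, $k_M$ and $p$ reaches the same identity with more work. Second, a caveat about the lemma rather than your proof: the sets $\widehat{W_{\#,M}}$ indexed by the right-hand side of \eqref{eq:consist} do not partition $\widehat{G_{\#,N}}$ (being finitely-paired in $|z|\leq M$ is a stronger requirement than in $|z|\leq N$), so the identity is one between the formally defined weights and not an instance of countable additivity over a partition; your purely combinatorial derivation from (B1), (B2) and \eqref{eq:consistency} is therefore the appropriate route. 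You should also state explicitly that the rearrangements of the (infinite) sums over $\mathcal{K}_{\#,M}$ are justified by the nonnegativity of all weights.
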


We continue to study the paired graphs after refining the pairings.

Refining the pairing is an improvement because it guarantees there is a graph for switching, and the switching is a bijection. 

However, we have not proved that it is between graphs of equal weights.
For this purpose, we need the following result. 
\begin{proposition}\label{prop:keyWeights}
Suppose that $P=P(x\rightarrow y)$ is a union of paired paths $x\rightarrow y$, $y\rightarrow x$, and loops $x\rightarrow x$, $y\rightarrow y$ in the region $|z|\leq N$, except that at $x$ and $y$ the edges are not paired.

Suppose that $G_{\delta,N}(x\rightarrow y)$ is a partially paired graph in the region $|z|\leq N$, and $P\subset G_{\delta,N}(x\rightarrow y) $ contains all the paired paths and loops connecting $x$ and $y$.
$G_{0,N}(y\rightarrow x)$ is the partially paired graph obtained by switching $P$ in $G_{\delta,N}(x\rightarrow y)$

Then $\widehat{G_{\delta,N}}(x\rightarrow y)$ and $\widehat{G_{0,N}}(y\rightarrow x)$ have the same weights
    \begin{align}\label{eq:equalWeights}
        D_{\widehat{G_{\delta,N}}(x\rightarrow y) }=D_{\widehat{G_{0,N}}(y\rightarrow x) }.
    \end{align}
\end{proposition}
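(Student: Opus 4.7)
The plan is to reduce \eqref{eq:equalWeights} to invariance of the weight formula under the reversal of edges along $P$. By \eqref{eq:count}, $D_{\widehat{G_{\#,N}}}$ depends only on the projection $\Pi(G_{\#,N})$, and by \eqref{eq:ctilde} the same holds for $C_{\widetilde{G_{\#,N}}}$. I would write $D(\mathcal{K})$ and $C(\mathcal{K})$ for these common values, and set $\mathcal{K}_\delta:=\Pi(G_{\delta,N}(x\rightarrow y))$ and $\mathcal{K}_0:=\Pi(G_{0,N}(y\rightarrow x))$; these two unpaired graphs differ only by reversal of each edge of $\Pi(P)$ together with a swap of the $\delta/0$ label. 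Combining \eqref{eq:uniform} with the uniformity of $D$ and $C$ over partial pairings of fixed projection yields
\begin{align*}
N'(\mathcal{K})\,D(\mathcal{K}) = N(\mathcal{K})\,C(\mathcal{K}),
\end{align*}
where $N(\mathcal{K})$ is the total number of partial pairings projecting to $\mathcal{K}$ and $N'(\mathcal{K})$ counts those admitting a finitely-paired extension. It therefore suffices to establish the three equalities $C(\mathcal{K}_\delta)=C(\mathcal{K}_0)$, $N(\mathcal{K}_\delta)=N(\mathcal{K}_0)$, and $N'(\mathcal{K}_\delta)=N'(\mathcal{K}_0)$.

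For the first two, I would inspect the weight formula in \eqref{eq:pairing} directly. The vector $\mathbf{n}(\mathbf{n}_\delta,\mathbf{n}_0)$ from \eqref{def:nndeltan0} records only the total number of edges between each pair of nearest neighbors, regardless of direction or $\delta/0$ label, and is therefore invariant under switching; together with $\mathbf{J}_{k,l}=\mathbf{J}_{l,k}$, this makes $(\beta\mathbf{J})^{\mathbf{n}(\mathbf{n}_\delta,\mathbf{n}_0)}$ and $\mathbf{n}(\mathbf{n}_\delta,\mathbf{n}_0)!$ invariant. The factor $\Psi$ in \eqref{def:Psi} depends only on the out-degrees at interior vertices $z\notin\{x,y\}$; since $P$ is a union of $x$-to-$y$ paths and loops rooted at $\{x,y\}$, the number of $P$-in-edges equals the number of $P$-out-edges at every such vertex $z$, so reversing the $P$-edges at $z$ swaps matching in/out pairs and preserves the out-degree. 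Hence $C(\mathcal{K}_\delta)=C(\mathcal{K}_0)$, and since the pairing count $\prod_{z\notin\{x,y\}}(\text{out-deg}_z)!$ is determined by out-degrees alone, also $N(\mathcal{K}_\delta)=N(\mathcal{K}_0)$.

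For the remaining equality $N'(\mathcal{K}_\delta)=N'(\mathcal{K}_0)$, I would construct an explicit bijection $\phi$ between partial pairings projecting to $\mathcal{K}_\delta$ and those projecting to $\mathcal{K}_0$, extending the assignment $G_{\delta,N}\mapsto G_{0,N}$ and carrying partial pairings with finitely-paired extensions to the same. At vertices off $P$, $\phi$ keeps the pairing untouched; at vertices $z$ on $P$, it reassigns the pairing through the canonical identification of in- and out-edges induced by reversing the $P$-edges at $z$. Because $P$ lies strictly inside $|z|<N$ (boundary edges are never paired in a partial pairing), the boundary loose ends and the external graph structure are unchanged by $\phi$, so paired walks of $G'$ correspond to paired walks of $\phi(G')$ with the $P$-portion reversed and the same length; any finite external completion of $G'$ therefore pulls back to one of $\phi(G')$, giving $N'(\mathcal{K}_\delta)=N'(\mathcal{K}_0)$ and completing the proof.

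The hard part will be making the construction of $\phi$ rigorous at those vertices $z$ on $P$ where the pairing in a general $G'$ pairs a $P$-edge with a non-$P$-edge: the naive vertex-wise reversal fails to yield a valid $\mathcal{K}_0$-pairing there, and a more careful surgical reassignment along the full path $P$ is required. Once $\phi$ is defined, verifying preservation of the ``finitely-paired extension'' property reduces to tracking how paired walks in $|z|\le N$ are rerouted under the local reassignment and confirming that this commutes with any exterior pairing.
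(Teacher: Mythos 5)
Your proposal follows essentially the same route as the paper's proof (which proceeds via Lemma \ref{LM:ReSurg}): both reduce \eqref{eq:equalWeights} to the averaging identity \eqref{eq:uniform} together with the uniformity rules \eqref{eq:ctilde} and \eqref{eq:count}, the invariance of the $C$-weights under reversal of $P$ (the paper's \eqref{eq:surgicalLoop1}, which you verify directly from \eqref{eq:pairing}), and a bijection between the partial pairings projecting to $\Pi(G_{\delta,N}(x\rightarrow y))$ and those projecting to $\Pi(G_{0,N}(y\rightarrow x))$. The ``hard part'' you flag --- reassigning pairings at vertices where a $P$-edge is paired with a non-$P$-edge --- is precisely what the paper's explicitly defined ``surgical switching'' $\xrightarrow[\text{surgical}]{\mathcal{P}}$ supplies, so your plan is sound and coincides with the paper's argument.
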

This proposition will be proved in Subsection \ref{subsec:KeyWeights}.

Proposition \ref{prop:keyWeights} is important because it implies the important Proposition \ref{Prop:limitDxy}.


\subsubsection{Proof of Proposition \ref{Prop:limitDxy} by assuming Proposition \ref{prop:keyWeights}}
We start with an easy case. 

Specifically, suppose that the partially paired graphs $G_{\delta,N}(x\rightarrow y)$ and $G_{0,N}(y\rightarrow x)$ in the region $|z|\leq N$ are related as follows: A paired graph $P(x\rightarrow y)$ is subgraph of $ G_{\delta,N}(x\rightarrow y)$, i.e.
\begin{align}\label{eq:condi1}
P(x\rightarrow y)\subset G_{\delta,N}(x\rightarrow y)
\end{align}
and it contains all the paths and loops connected to $x$ and $y$, and one obtains $G_{0,N}(y\rightarrow x)$ by switching $P(x\rightarrow y),$ i.e.
\begin{align}\label{eq:condi2}
G_{\delta,N}(x\rightarrow y)\xrightarrow{P} G_{0,N}(y\rightarrow x).
\end{align}
Then we apply Proposition \ref{prop:keyWeights} to obtain,
\begin{align}
     D_{\widehat{G_{\delta,N}}(x\rightarrow y) }=D_{\widehat{G_{0,N}}(y\rightarrow x) }.\label{eq:sameWei}
\end{align}

This is important because, by \eqref{eq:finitelyLooped}, 
\begin{align*}
&\displaystyle\lim_{L\rightarrow \infty}\Big(\frac{\tilde{F}_{\delta,L}(x\rightarrow y)\ Z_{0,L}}{\tilde{Z}_{\delta,L}Z_{0,L}}-\frac{F_{0,L}(y\rightarrow x)\ \tilde{Z}_{\delta,L}}{\tilde{Z}_{\delta,L}Z_{0,L}}\Big)\\
=&\sum_{G_{\delta,N}(x\rightarrow y)\in PP_{N} } D_{ \widehat{G_{\delta,N}}(x\rightarrow y) }-\sum_{G_{0,N}(x\rightarrow y)\in PP_N } D_{ \widehat{G_{0,N}}(y\rightarrow x) }.
\end{align*} Recall the definition of partially paired graphs, $PP_N$, in Definition \eqref{def:partPair}. 
Then \eqref{eq:sameWei} implies that some terms cancel each other under the condition \eqref{eq:condi1}.

We have a minor difficulty in proving that all the terms $D_{ \widehat{G_{\delta,N}}(x\rightarrow y) }$ will be canceled because \eqref{eq:condi1} might not hold. 
For this, we need the consistency between different $N$s in \eqref{eq:consist} and need a standard argument, which says that if $N$ is sufficiently large, then up to a correction of order $\epsilon_{N}$, with $$\lim_{N\rightarrow \infty}\epsilon_{N}=0,$$ all the paths and loops connecting $x$ and $y$ are in the region $|z|\leq N.$ The detailed proof is straightforward but tedious. We choose to skip it.

Similarly, almost all terms $D_{ \widehat{G_{0,N}}(y\rightarrow x) }$ will be canceled.

Therefore, we proved that
\begin{align}
   \lim_{L\rightarrow \infty}\Big(\frac{\tilde{F}_{\delta,L}(x\rightarrow y)\ Z_{0,L}}{\tilde{Z}_{\delta,L}Z_{0,L}}-\frac{F_{0,L}(y\rightarrow x)\ \tilde{Z}_{\delta,L}}{\tilde{Z}_{\delta,L}Z_{0,L}}\Big)= 0.
\end{align}

This and the definition of $ D_{1,x,y}(L)$ in \eqref{eq:differe} imply the desired $\displaystyle\lim_{L\rightarrow \infty} D_{1,x,y}(L)=0$. 

Similarly, we prove that $\displaystyle\lim_{L\rightarrow \infty}D_{2,x,y}(L)=0$, and hence finish proving Proposition \ref{Prop:limitDxy}.


\subsubsection{Switching between graphs of equal weights: Proof of Proposition \ref{prop:keyWeights}}\label{subsec:KeyWeights}
For technical reasons, it is easier to prove a slightly more general result, which will be Lemma \ref{LM:ReSurg} below.

We start by defining a notion of ``surgical switching by a graph $\mathcal{P}$" for some unpaired graph $\mathcal{P}=\mathcal{P}(x\rightarrow y)$.  
Here we emphasize that $\mathcal{P}$ is unpaired, and thus, in switching we do not follow the rule of pairing, and hence, we destroy and re-establish some of them. 

Recall that $\Pi$, defined in \eqref{def:projective}, is the projective map so that $\Pi(G_{\#}(x\rightarrow y))$ is the unpaired graph underlying the paired one $G_{\#}(x\rightarrow y)$. 

\begin{definition}
Suppose that $G_{\delta,\mathcal{P}}=G_{\delta,\mathcal{P}}(x\rightarrow y)$ is a paired infinite graph. And $\mathcal{P}=\mathcal{P}(x\rightarrow y)$ is an unpaired subgraph in $\Pi(G_{\delta,\mathcal{P}})$.

We perform a surgical switching, using $\mathcal{P}$, on $G_{\delta,\mathcal{P}}$ to obtain $G_{0,\overline{\mathcal{P}}}=G_{0,\overline{\mathcal{P}}}(y\rightarrow x)$ as follows.

For $G_{0,\overline{\mathcal{P}}}$, its corresponding unpaired graph $\Pi(G_{0,\overline{\mathcal{P}}})$, is obtained from $\Pi(G_{\delta})$ by switching $\mathcal{P}$ with the procedure described in \eqref{eq:switch}.
 
Next, we consider the pairing in the new graph $G_{0,\overline{\mathcal{P}}}$. 

In brief, some pairings remain unchanged, and some pairings must be modified depending on whether they are paired with edges in $\mathcal{P}$. 

\begin{itemize}
\item[(\text{I})] We start with an easy case. Specifically, suppose that $\mathcal{P}$ is a $x\rightarrow y$ path that does not contain any loop and is of the form 
\begin{align}\label{eq:noLoop}
\begin{split}
x&=a_{0}\rightarrow a_{1}\rightarrow a_{2}\rightarrow \cdots \rightarrow a_{n}=y,\\
&\text{and}\ a_{k}\not=a_{l}\ \text{if}\ k\not=l.
\end{split}
\end{align}

Fix a site $v\in \mathbb{Z}^3$, the pairing of an edge $e_{v\rightarrow w}$ (or $e_{w\rightarrow v}$) at the site $v$ will stay the same in the following two possibilities:
\begin{itemize}
\item the edge is not paired with any edge of $\mathcal{P}$,
\item the edge $e_{v\rightarrow w}$ (or $e_{w\rightarrow v}$) is part of $\mathcal{P}$ and is already paired to another edge in $\mathcal{P}$ at $v$. 
\end{itemize}
    
Otherwise, $v=a_k$ for some $k=1,2,\cdots,n-1$ there are two possible pairings of $e_{a_k\rightarrow w}$ (or $e_{w\rightarrow a_k}$) at the site $a_k$, and we handle them differently. In what follows we only consider $e_{a_k\rightarrow w}$ at the site $a_k$. That for $e_{w\rightarrow a_k}$ is almost identical, and hence is omitted.
\begin{itemize}
\item  The edge $e_{a_{k}\rightarrow w}$ is part of $\mathcal{P}$, thus $w=a_{k+1}$ by \eqref{eq:noLoop}. At $a_k$ it is paired with an edge $e_{b_{k-1}\rightarrow a_k}\not\in \mathcal{P}$. And the edge $e_{a_{k-1}\rightarrow a_k}\in \mathcal{P}$ is paired with $ e_{a_k\rightarrow c_{k+1}}\not\in \mathcal{P}$. 

Then in the new graph $G_{0,\overline{\mathcal{P}}}$ where we reverse the path $\mathcal{P}$, at $a_k$, $e_{a_{k}\leftarrow a_{k+1}}$ is paired with $e_{a_k\rightarrow c_{k+1}}$; and $e_{a_{k-1}\leftarrow a_k}$ is paired with $e_{a_k\leftarrow b_{k-1}}$. 

\item The edge $e_{a_{k}\rightarrow w}$ is not part of $\mathcal{P}$, and at $a_k$ it is paired with $e_{a_{k-1}\rightarrow a_k}\in \mathcal{P},$ and the edge $e_{a_k\rightarrow a_{k+1}}\in \mathcal{P}$ is paired with an edge $e_{c\rightarrow a_k}$. 

Then in the new graph $G_{0,\overline{\mathcal{P}}}$ where we reverse the path $\mathcal{P}$, at $a_k,$ $e_{a_{k}\rightarrow w}$ is paired with $e_{a_k\leftarrow a_{k+1}}$, and $e_{c\rightarrow a_k}$ is paired with $e_{a_{k-1}\leftarrow a_k}$.

\end{itemize}
To illustrate the procedure, we use the Figures \ref{image2} and \ref{image3}. Both are paired graphs. In Figure \ref{image2}, the paired path and loop are $x\rightarrow a1\rightarrow a2\rightarrow y$ and $b1\rightarrow a1\rightarrow b2\rightarrow b3\rightarrow a2\rightarrow b4\rightarrow b1$. And the chosen (unpaired) path for surgical switching, $\mathcal{P}$, is $$\mathcal{P}=x\rightarrow a1\rightarrow b2\rightarrow b3\rightarrow a2\rightarrow y.$$ Figure \ref{image3} is obtained after the surgical switching, the paired path and loop are $y\rightarrow a2\rightarrow b4\rightarrow b1\rightarrow a1\rightarrow x$ and $a1\rightarrow a2\rightarrow b3\rightarrow b2\rightarrow a1.$
    
\item[(\text{II})] For the general case, $\mathcal{P}$ can be decomposed, disjointly, into 
    \begin{align}\label{eq:decomP}
    \mathcal{P}=\mathcal{P}_1\cup \mathcal{P}_2\cup \cdots \mathcal{P}_{m}
    \end{align}
    and each of $\mathcal{P}_k$ is a self-avoiding path or loop, then we make the surgical switching for each one of them similarly.
    \end{itemize}
\end{definition}

\begin{figure}[htbp]
           \centering
           \includegraphics[width=0.5\textwidth]{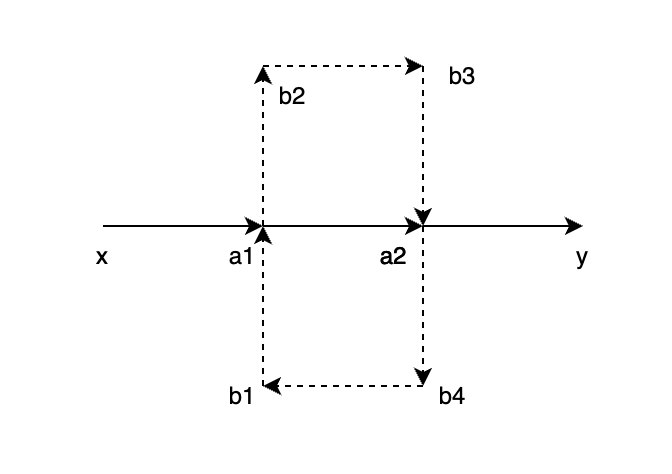}
           \caption{Before surgical switching}
           \label{image2}
    \end{figure}

\begin{figure}[htbp]
           \centering
           \includegraphics[width=0.5\textwidth]{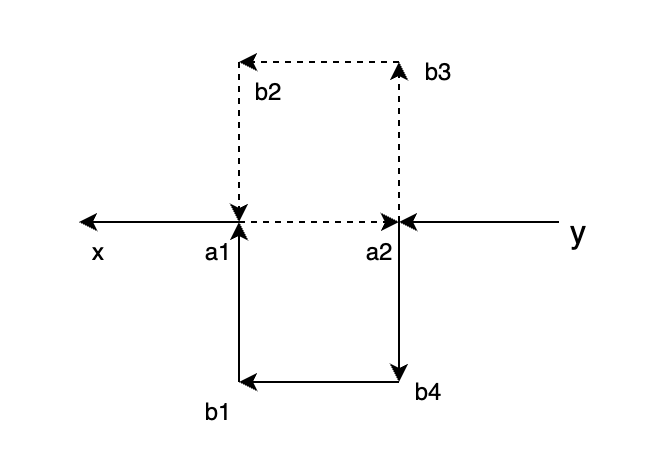}
           \caption{After surgical switching}
           \label{image3}
\end{figure}

To facilitate later discussions we use the following notation for the surgical switching discussed above:
\begin{align}\label{def:surgicalTran}
    G_{\delta,\mathcal{P}}\xrightarrow[\text{surgical}]{\mathcal{P}}G_{0,\overline{\mathcal{P}}}.
\end{align}

A keen reader might notice a problem: Because the decomposition of $\mathcal{P}$ in \eqref{eq:decomP} might not be unique, $ G_{\delta,\mathcal{P}}\xrightarrow[\text{surgical}]{\mathcal{P}}G_{0,\overline{\mathcal{P}}}$ can be defined differently. 
This is true. However, this will not cause a problem because, instead of considering the mapping between two paired graphs, we study the mapping between sets of paired graphs where all the possible pairings with edges of $\mathcal{P}$ are included, and hence the decomposition in \eqref{eq:decomP} is not important.

The next result is obvious.
\begin{lemma}
    For any fixed finite graph $\mathcal{P}=\mathcal{P}(x\rightarrow y)$, the mapping $\xrightarrow[\text{surgical}]{\mathcal{P}}$ is one-to-one.
\end{lemma}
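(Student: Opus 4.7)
The plan is to establish injectivity by exhibiting an explicit left inverse. Specifically, I will show that if $G_{\delta,\mathcal{P}} \xrightarrow[\text{surgical}]{\mathcal{P}} G_{0,\overline{\mathcal{P}}}$, then applying the surgical switching along $\overline{\mathcal{P}}$ to $G_{0,\overline{\mathcal{P}}}$ returns $G_{\delta,\mathcal{P}}$. Since the underlying unpaired graphs are related by the standard switching \eqref{eq:switch}, which is involutive when applied with $\overline{\mathcal{P}}$, the nontrivial content is to verify that the pairings are restored.

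First I reduce to the self-avoiding case. By the decomposition \eqref{eq:decomP}, the surgical operation is applied piece-by-piece on each self-avoiding component $\mathcal{P}_i$, and the corresponding inverse uses the reversed decomposition $\overline{\mathcal{P}_1}, \ldots, \overline{\mathcal{P}_m}$. Moreover, at any site, the pairing rule modifies only those pairings in which some $\mathcal{P}$-edge is paired with a non-$\mathcal{P}$-edge; all other pairings are preserved by definition and are therefore trivially undone. Consequently it suffices to verify the inversion locally at an interior vertex $a_k \notin \{x,y\}$ lying on a single self-avoiding component of the form \eqref{eq:noLoop}.

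At such a site I trace through the two sub-cases in case (I) of the definition. Consider the first: in $G_{\delta,\mathcal{P}}$ the outgoing $\mathcal{P}$-edge $e_{a_k \to a_{k+1}}$ is paired with a non-$\mathcal{P}$ incoming edge $e_{b_{k-1} \to a_k}$, and the incoming $\mathcal{P}$-edge $e_{a_{k-1} \to a_k}$ is paired with a non-$\mathcal{P}$ outgoing edge $e_{a_k \to c_{k+1}}$. The prescription gives the new pairings $e_{a_{k+1} \to a_k} \leftrightarrow e_{a_k \to c_{k+1}}$ and $e_{a_k \to a_{k-1}} \leftrightarrow e_{b_{k-1} \to a_k}$ in $G_{0,\overline{\mathcal{P}}}$. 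Interpreted from the viewpoint of $\overline{\mathcal{P}}$, the outgoing $\overline{\mathcal{P}}$-edge $e_{a_k \to a_{k-1}}$ is paired with a non-$\overline{\mathcal{P}}$ incoming edge $e_{b_{k-1} \to a_k}$, and the incoming $\overline{\mathcal{P}}$-edge $e_{a_{k+1} \to a_k}$ with a non-$\overline{\mathcal{P}}$ outgoing edge $e_{a_k \to c_{k+1}}$, which is precisely the configuration the surgical rule expects as input. Applying the rule again to $\overline{\mathcal{P}}$ at $a_k$ therefore produces $e_{a_{k-1} \to a_k} \leftrightarrow e_{a_k \to c_{k+1}}$ and $e_{a_k \to a_{k+1}} \leftrightarrow e_{b_{k-1} \to a_k}$, recovering the original pairing. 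The second sub-case, together with the symmetric version for $e_{w \to a_k}$ noted in the definition, is handled identically. Since each interior vertex on a self-avoiding path is visited exactly once, the local check at every such $a_k$ assembles to a global inverse.

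The main subtlety is the one flagged by the author right after \eqref{def:surgicalTran}: the decomposition \eqref{eq:decomP} of $\mathcal{P}$ into self-avoiding paths and loops is not unique, and different choices can in principle produce different paired graphs $G_{0,\overline{\mathcal{P}}}$. This causes no problem for the present lemma, whose statement fixes $\mathcal{P}$ together with an implicit decomposition; the inverse map is then built using the reversed decomposition. With that convention the local verification above produces a two-sided inverse, so $\xrightarrow[\text{surgical}]{\mathcal{P}}$ is one-to-one, as claimed.
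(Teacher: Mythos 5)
Your argument is correct: exhibiting the surgical switching along $\overline{\mathcal{P}}$ as an explicit two-sided inverse, checked locally at each interior site of each self-avoiding component (where the rule simply swaps the two non-$\mathcal{P}$ partners, so a second application swaps them back), is exactly the reasoning behind the paper's claim --- the paper itself offers no proof, stating only that the result is obvious. Your treatment of the non-uniqueness of the decomposition \eqref{eq:decomP}, namely fixing the decomposition and inverting with the reversed one, is consistent with the convention the author adopts in the remark following \eqref{def:surgicalTran}.
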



We are ready to state one more important property, besides the two in \eqref{eq:ctilde} and \eqref{eq:consistency}, for the weights $C_{\widetilde{G_{\#,N}}}$, defined in \eqref{eq:WeightPre}, with $\#=\delta$ or $0.$ In what follows we use the conventions in \eqref{eq:conventionDelta0}.
\begin{itemize}
\item 
If $G_{\delta,N}  \xrightarrow[surgical]{\mathcal{P}} K_{0,N} $ for some graph $\mathcal{P}=\mathcal{P}(x\rightarrow y)$ contained in the region $|z|<N$, then
\begin{align}\label{eq:surgicalLoop1}
    C_{\widetilde{G_{\delta,N}} }=C_{\widetilde{K_{0,N}} }.
\end{align}

Similar to \eqref{eq:ctilde}, it is implied by \eqref{eq:pairing} with a similar justification, and recall that we do not pair edges at the sites $x$ and $y.$

\end{itemize}


By now we have finished analyzing the weights $C_{\widetilde{G_{\#,N}}}$, we continue to study the weights of the graph with refined pairing $D_{\widehat{G_{\#,N}}},$ defined in \eqref{eq:finitelyLooped}.

The next property implies the desired Proposition \ref{prop:keyWeights}.
\begin{lemma}\label{LM:ReSurg}
     If $G_{\delta,N}(x\rightarrow y)$ and $F_{0,N}(y\rightarrow x)$ are two partially paired graphs in the region $|z|\leq N$, and $\mathcal{P}=\mathcal{P}(x\rightarrow y)\subset \Pi(G_{\delta,N}(x\rightarrow y))$ is a unpaired graph contained in the region $|z|<N$, such that $$G_{\delta,N}(x\rightarrow y)\xrightarrow[\text{surgical}]{\mathcal{P}} F_{0,N}(y\rightarrow x),$$ then
\begin{align}\label{eq:ReSurg}
    D_{\widehat{G_{\delta,N}}(x\rightarrow y)}=D_{\widehat{F_{0,N}}(y\rightarrow x)}.
\end{align}
\end{lemma}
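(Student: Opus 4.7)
\textbf{Proof plan for Lemma \ref{LM:ReSurg}.} The strategy is to reduce the claim for the refined weights $D_{\widehat{\,\cdot\,}}$ to the already-established invariance \eqref{eq:surgicalLoop1} for the preliminary weights $C_{\widetilde{\,\cdot\,}}$, using the two defining identities \eqref{eq:uniform} and \eqref{eq:count} of the $D$-weights. Set $\mathcal{K}_\delta := \Pi(G_{\delta,N}(x\rightarrow y))$ and $\mathcal{K}_0 := \Pi(F_{0,N}(y\rightarrow x))$. Property \eqref{eq:count} says that $D_{\widehat{H}}$ depends only on $\Pi(H)$, so combining with \eqref{eq:uniform} gives
\begin{equation*}
N_\delta \cdot D_{\widehat{G_{\delta,N}}(x\rightarrow y)} \;=\; \sum_{\Pi(H_\delta)=\mathcal{K}_\delta} C_{\widetilde{H_\delta}}, \qquad
N_0 \cdot D_{\widehat{F_{0,N}}(y\rightarrow x)} \;=\; \sum_{\Pi(H_0)=\mathcal{K}_0} C_{\widetilde{H_0}},
\end{equation*}
where $N_\#$ denotes the number of partially paired graphs projecting to $\mathcal{K}_\#$. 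It therefore suffices to prove $N_\delta = N_0$ and that the two sums on the right coincide.

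Next I would verify $N_\delta = N_0$ by a degree-preservation argument. From \eqref{def:Psi}, $N_\#$ factorizes as a product over interior sites $z\notin\{x,y\}$ (with $|z|<N$) of the factorial of the total outgoing degree at $z$. Surgical switching along $\mathcal{P}$ only reverses the edges of $\mathcal{P}$; at each site along $\mathcal{P}$ the number of reversed in-coming and out-going edges is the same (the two edges of $\mathcal{P}$ meeting at an interior site), so the total in-degree and out-degree at every site are preserved by the switching. Hence the factors in $N_\delta$ and $N_0$ agree site by site.

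I would then establish the matching of the two $C$-sums via an explicit bijection $\Phi_{\mathcal{P}}$ between partially paired graphs projecting to $\mathcal{K}_\delta$ and those projecting to $\mathcal{K}_0$, defined by performing the surgical switching along $\mathcal{P}$. For each such $H_\delta$, the unpaired projection of $\Phi_{\mathcal{P}}(H_\delta)$ is $\mathcal{K}_0$ by construction (reversing $\mathcal{P}$ at the unpaired level), and the inverse is surgical switching along $\overline{\mathcal{P}}$ applied to $\Phi_{\mathcal{P}}(H_\delta)$, giving injectivity and surjectivity. By \eqref{eq:surgicalLoop1} each such map preserves the $C$-weight, i.e.\ $C_{\widetilde{H_\delta}} = C_{\widetilde{\Phi_{\mathcal{P}}(H_\delta)}}$, whence
\begin{equation*}
\sum_{\Pi(H_\delta)=\mathcal{K}_\delta} C_{\widetilde{H_\delta}} \;=\; \sum_{\Pi(H_0)=\mathcal{K}_0} C_{\widetilde{H_0}}.
\end{equation*}
Combined with $N_\delta = N_0$, this gives \eqref{eq:ReSurg}.

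The main obstacle will be handling the case when $\mathcal{P}$ is not self-avoiding. The decomposition \eqref{eq:decomP} of $\mathcal{P}$ into self-avoiding paths and loops is not unique, and at a site shared by several components the surgical re-pairing prescribed in case (I) depends on the chosen decomposition. As the author remarks after \eqref{def:surgicalTran}, the cure is to view $\Phi_\mathcal{P}$ as a map on \emph{sets} of pairings rather than on individual pairings: I would check that for any fixed decomposition, $\Phi_\mathcal{P}$ is a bijection between the pairings projecting to $\mathcal{K}_\delta$ and those projecting to $\mathcal{K}_0$, so that the $C$-weighted sums (which only see the set and the weights) are independent of the decomposition and the two sides of \eqref{eq:ReSurg} match.
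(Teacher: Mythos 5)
Your proposal is correct and follows essentially the same route as the paper's proof: reduce the equality of the refined weights $D_{\widehat{\,\cdot\,}}$ to an equality of sums of the preliminary weights $C_{\widetilde{\,\cdot\,}}$ via \eqref{eq:uniform} and \eqref{eq:count}, then match those sums using the surgical-switching bijection together with \eqref{eq:surgicalLoop1}, and finally equate the cardinalities of the two fibers (the paper's $\Upsilon_{G_{\delta,N}}=\Upsilon_{F_{0,N}}$, your $N_\delta=N_0$). Your additional degree-preservation argument for $N_\delta=N_0$ is a valid but redundant alternative, since the bijection you construct for matching the $C$-sums already yields that count equality, exactly as in the paper.
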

\begin{proof}
To simplify notations we use the same conventions as in \eqref{eq:conventionDelta0}.

We start from \eqref{eq:surgicalLoop1} and \eqref{eq:uniform}, which imply that 
\begin{align}\label{eq:iden1}
\sum_{\Pi(V_{\delta,N})=\Pi(G_{\delta,N})} D_{\widehat{V_{\delta,N}}  }
    =\sum_{\Pi(K_{0,N})=\Pi(F_{0,N})}D_{\widehat{K_{0,N}} }.
\end{align}

Our strategy is to show that, (1) on both sides of \eqref{eq:iden1}, the summations are taken over terms of equal weights; (2) the number of elements in the two summations are equal. Thus, every term involved is of equal weight as desired.

For the left hand side, the rule set in \eqref{eq:count} implies that $D_{\widehat{V_{\delta,N}} }=D_{\widehat{G_{\delta,N}} }$ in \eqref{eq:count}. Thus, 
\begin{align}\label{eq:iden2}
    \begin{split}
    &\sum_{\Pi(V_{\delta,N})=\Pi(G_{\delta,N})} D_{\widehat{V_{\delta,N}} }
    =\Upsilon_{G_{\delta,N} }\ 
    D_{\widehat{G_{\delta,N}} },
    \end{split}
\end{align} where $\Upsilon_{G_{\delta,N} }$ is used to count the number of elements in the summation:
\begin{align*}
    \Upsilon_{G_{\delta,N} }:=&\Big|\Big\{ \widehat{V_{\delta,N}}  \ \Big| \  \Pi(V_{\delta,N} )=\Pi(G_{\delta,N} )  \Big\} \Big|.
\end{align*}

Similarly, the right-hand side takes a new form
\begin{align}\label{eq:iden3}
\begin{split}
 \sum_{\Pi(K_{0,N})=\Pi(F_{0,N})}D_{\widehat{K_{0,N}} } 
 = \Upsilon_{F_{0,N} }  D_{\widehat{F_{0,N}} } 
\end{split}
\end{align} with 
\begin{align*}
    \Upsilon_{F_{0,N} }:=&\Big|\Big\{ \widehat{K_{0,N}} \ \Big| \  \Pi(K_{0,N} )=\Pi(F_{0,N} )  \Big\} \Big|.
\end{align*}

The three identities \eqref{eq:iden1}-\eqref{eq:iden3} imply that
\begin{align}\label{eq:equalNo}
\begin{split}
     \Upsilon_{G_{\delta,N} } D_{\widehat{G_{\delta,N}} }
   = \Upsilon_{F_{0,N} } D_{\widehat{F_{0,N}}  }.
\end{split}    
\end{align}  

Now we claim that
\begin{align}\label{eq:upsilon}
\Upsilon_{G_{\delta,N} }=\Upsilon_{F_{0,N} }.
\end{align} If this holds, then it implies the desired identity $$D_{\widehat{G_{\delta,N}} }=D_{\widehat{F_{0,N}}  }.$$ 

To prove \eqref{eq:upsilon}, we observe that for the $\mathcal{P}$ provided in the lemma, the surgical transformation $\xrightarrow[\text{surgical}]{\mathcal{P}}$, defined in \eqref{def:surgicalTran}, is a bijection between the sets $\Big\{ \widehat{V_{\delta,N}}  \ \Big| \  \Pi(V_{\delta,N} )=\Pi(G_{\delta,N} )  \Big\}$ and $\Big\{ \widehat{K_{0,N}}  \ \Big| \  \Pi(K_{0,N}  )=\Pi(F_{0,N} )  \Big\}$. Therefore, these two finite sets have the same number of elements, and hence \eqref{eq:upsilon} holds.

The proof is complete.  
\end{proof}

\end{document}